\documentclass{article}

\usepackage{arxiv}

\usepackage[utf8]{inputenc} 
\usepackage[T1]{fontenc}    
\usepackage{hyperref}       
\usepackage{url}            
\usepackage{booktabs}       
\usepackage{amsfonts}       
\usepackage{nicefrac}       
\usepackage{microtype}      
\usepackage{lipsum}		
\usepackage{graphicx}
\usepackage{natbib}
\usepackage{doi}

\newcommand{\R}{{\rm I\!R}}

\usepackage{amsmath}
\usepackage{amssymb}
\usepackage{amsfonts}
\usepackage{bm}

\usepackage{amsthm}
\usepackage{overpic}
\usepackage{lipsum}
\usepackage{enumitem}
\usepackage{cases}
\usepackage{subcaption}
\usepackage{graphicx}

\DeclareMathOperator*{\argmax}{arg\,max}

\usepackage{ifthen}
\usepackage{graphicx,epstopdf,amssymb,amsthm}
\usepackage{ulem,color}
\usepackage{epstopdf}

\usepackage[utf8]{inputenc}
\usepackage[english]{babel}
\usepackage{caption}

\DeclareCaptionLabelFormat{lc}{\MakeLowercase{#1}~#2}
\captionsetup{labelfont=sc,labelformat=lc}

{
  \theoremstyle{plain}
  \newtheorem{assumption}{Assumption}
}

\newtheorem{theorem}{Theorem}
\newtheorem{corollary}{Corollary}
\newtheorem{remark}{Remark}
\newtheorem{lemma}{Lemma}

\newtheorem{definition}{Definition}
\newtheorem{proposition}{Proposition}
\newcommand{\edit}[1]{\textcolor{black}{#1}}

\title{Analysis and Applications of Population Flows \\ in a Networked SEIRS Epidemic Process}

\date{} 					

\author{ \href{https://orcid.org/0000-0002-2489-4411}{\includegraphics[scale=0.06]{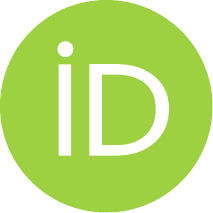}\hspace{1mm}Brooks A. Butler}\\
	Elmore Family School of Electrical and Computer Engineering\\
	Purdue University\\
	\texttt{brooksbutler@purdue.edu} \\
        \And
	\href{https://orcid.org/0000-0001-6633-7827}{\includegraphics[scale=0.06]{orcid.pdf}\hspace{1mm}Raphael Stern} \\
	Department of Civil Engineering\\
	University of Minnesota\\
	\texttt{rstern@umn.edu} \\
	\And
	\href{https://orcid.org/0000-0002-4095-7320}{\includegraphics[scale=0.06]{orcid.pdf}\hspace{1mm}Philip E. Par\'{e}}
        \thanks{This work was funded in part by the National Science Foundation, grants NSF-CNS \#2028738 (P.E.P.), NSF-CNS \#2028946 (R.S.), and NSF-ECCS \#2032258 and \#2238388 (P.E.P.).}\\
	Elmore Family School of Electrical and Computer Engineering\\
	Purdue University\\
	\texttt{philpare@purdue.edu} \\
}



\hypersetup{
pdftitle={Analysis and Applications of Population Flows in a Networked SEIRS Epidemic Process},
pdfsubject={q-bio.NC, q-bio.QM},
pdfauthor={Brooks A. Butler, Raphael Stern, Philip E. Par\'{e}},
pdfkeywords={Epidemic Spread, Transportation, Network Modeling},
}

\begin{document}
\maketitle

\begin{abstract}
Transportation networks play a critical part in the spread of infectious diseases between populations. In this work, we define a networked susceptible-exposed-infected-recovered epidemic process with loss of immunity over time (SEIRS) that explicitly models the flow of individuals between sub-populations, which serves as the propagating mechanism for infection. We provide sufficient conditions for local stability and instability of the healthy state of the system and show that no perturbation of population flows can change the local stability of any healthy state. We also provide sufficient conditions for the existence and uniqueness of an endemic state. We then develop tools and methods for applying our model to real-world data, including spreading parameter estimation and disease arrival time prediction, and apply them in a case study using both travel and infection data from counties in Minnesota during the first year of the COVID-19 pandemic.
\end{abstract}

\keywords{Epidemic Spread \and Transportation \and Network Modeling}

\section{Introduction}
With the prevalence of transportation networks that enable rapid travel between locations globally, understanding how these transportation networks facilitate the spread of infectious diseases only continues to grow in importance. While certainly not the first globally transmitted disease, the recent COVID-19 pandemic has demonstrated the negative effect that a highly infectious disease can have on general health and safety of any population connected via travel~\cite{ruan2006effect,tatem2006global,lowe2014role,findlater2018human,hollingsworth2006will,tang2009epidemic}, as well as long-lasting impacts on both local and global economies~\cite{maital2020global,kaye2021economic,ahmad2020coronavirus,kumar2020social}. Given the growing impact and scale of human mobility as global interconnectivity continues to increase, as well as the near certainty of other pandemic-level diseases emerging in the future, it is critical that we develop a rigorous and sophisticated understanding of such phenomena in the networked sense. Thus, in this work, we aim to contribute to a deeper understanding of how transportation networks may be used to model and predict the transmission of infectious diseases over a network of connected populations.

This work builds off the deep body of literature dedicated to modeling epidemic processes using compartmental models~\cite{kermack1927contribution, brauer2008compartmental, prasse2020network, fagnani2017time, chen2019global}. Such models separate a population, or individuals in a population, into distinct compartments which represent their infection state. Simple compartmental models may have as few as two states, such as being either susceptible to a given disease or infected. For diseases with more complex behavior, more compartments may be needed to better describe the progression of a population through the epidemic process of the disease. In this work, we focus on a four-compartment model which separates individuals into one of four states, namely: susceptible, exposed, infected, and recovered. Further, we allow for a rate at which a population or individual may lose immunity to the given disease, and therefore may become susceptible again. This model, abbreviated to the SEIRS model, allows for the modeling of a disease that has a period in which an individual may be exposed and will become infectious, but is not immediately apparent. Additionally, the recovered state and eventual loss of immunity model a disease that can evolve over time, but may allow for temporary immunity after an initial recovery. This model is chosen, in part, to better emulate what is now known of the behavior of COVID-19 and its variants \cite{xiang2021covid,rahmandad2021behavioral,wang2020evaluating,chen2020time}. Compartmental models alone, however, do not account for the transmission of a disease in a networked environment where a general population can be divided into subpopulations connected via various transportation networks \cite{nowzari2016analysis,qian2021connecting,salehi2015spreading,qu2017sis,kang2018spreading,nadini2018epidemic,li2021modeling,yang2012epidemic,hackl2019epidemic,pare2020modeling}. Thus, an increase in scope is needed to account for this important behavioral complexity in disease transmission.

Previous work that incorporates networked population flows in an epidemic process model includes analysis of \edit{both networked SIS and SIR models} with flows \cite{ye2020network, yin2020novel}  as well as using a networked SIR model with flows to predict arrival times for various epidemics using global flight data \cite{brockmann2013hidden}, where both models are developed in continuous time. \edit{Additionally, a data-driven approach to modeling the effect of population flow on epidemic spreading is performed in \cite{di2021data}, which uses data from lockdowns in Italy during the initial spreading of COVID-19. Additional work on epidemic spreading with population flow has been performed in modeling multi-city epidemics \cite{arino2003multi, liu2013transmission, knipl2016stability, pujari2020multi} and in the modeling of disease spread in metapopulations \cite{arino2006disease, wang2014spatial, shao2022epidemic, zhu2021allocating}.} We contribute to the development of such networked models by including the exposed state in our model formulation as well as a loss of immunity. In our previous work, we consider capturing the effect of transportation on the spread of COVID-19 using a networked discrete-time SEIR model \cite{vrabac2021capturing, butler2021effect}; however, the key distinction in this work is that, in addition to being a continuous-time model, infection propagation over the network is modeled by the relocation of infected individuals to other sub-populations rather than assuming direct contact and intermingling between sub-populations. Further, we allow recovered individuals to experience a loss of immunity over time, which creates the possibility of an endemic state for the epidemic process.

\noindent In this paper, we make the following contributions:
\begin{enumerate}
    \item We define a networked SEIRS model that incorporates travel flows as a means of disease propagation
    \item We provide conditions for local stability and instability of the healthy state and prove that perturbations to population flows cannot change the stability of the healthy state
    \item We provide conditions under which a unique endemic state will exist
    \edit{
    \item We present methods for estimating the spreading parameters of our model using both infection and travel data as well as a method for predicting the arrival time of a disease using travel data to compute the effective distance between nodes
    }
    \item We demonstrate how these tools can be applied to our model using real-world \edit{mobility data from Minnesota during the COVID-19 pandemic}.
\end{enumerate}
We now provide the necessary definitions of notation and previous analytical results used in our model construction and analysis sections, respectively.

\edit{\section{Preliminaries}}

We denote the set of real numbers and positive real numbers as $\mathbb{R}$ and $\mathbb{R}_{>0}$, respectively. For any positive integer $n$, we have $[n] =\{1,2,...,n \}$. 
A diagonal matrix is denoted as diag$(\cdot)$. The transpose of a vector $x\in \mathbb{R}^n$ is $x^\top$. We use $\mathbf{0}$ and $\mathbf{1}$ to denote the vectors of the appropriate size whose entries all equal 0 and 1, respectively. We let $\mathcal{G} =(\mathbb{V},\mathbb{E},\mathbb{W})$ denote a weighted directed graph  
where $\mathbb{V} = \{ v_1, v_2,..., v_n\}$ is the set of nodes, $\mathbb{E} \subseteq \mathbb{V}\times \mathbb{V}$ is the set of edges, and 
$\mathbb{W}:\mathbb{E}\rightarrow \mathbb{R}_{>0}$
maps to 
the real valued edge weights on each edge. We denote the configuration of edges in a directed graph at time $t$ as $\mathcal{G}(t)=(\mathbb{V},\mathbb{E}(t),\mathbb{W})$, where 
$\mathbb{E}(t)$ denotes the set of edges at time $t$. 
Furthermore, we denote $\cup_{t \geq 0}\mathbb{E}(t)$ as the union of all non-zero edge configurations on a graph for all $t \geq 0$. We define a graph $\mathcal{G}$ as being strongly connected if there is a path consisting of nonzero edge weights from every node to every other node in the graph.

For a complex number $x$ we use $|x|$ and $\text{Re}(x)$ to denote its magnitude and real part, respectively. For a real square matrix $M$, we use $\rho(M)$ to denote its spectral radius and $s(M)$ to denote the largest real part amongst its eigenvalues, i.e.,
\begin{align*}
    \rho(M) &= \max \{|\lambda|: \lambda \in \sigma (M) \}, \\
    s(M) &= \max \{ \text{Re}(\lambda): \lambda \in \sigma(M) \},
\end{align*}
where $\sigma(M)$ denotes the spectrum of $M$.

A real square matrix is called \textit{Metzler} if its off-diagonal entries are all non-negative. Thus, any non-negative matrix is Metzler. 
\begin{lemma} \label{lem:metzler}
For any matrix $M$ and any real number $\phi$, if $A:=M-\phi I$, then $\sigma(M)=\sigma(A)+\phi$.
\end{lemma}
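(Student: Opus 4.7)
The plan is to prove set equality by showing both directions via the eigenvalue-eigenvector characterization (equivalently, via characteristic polynomials). The key algebraic observation is that $M$ and $A = M - \phi I$ share the same eigenvectors, and their eigenvalues differ by exactly $\phi$.

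First, I would show $\sigma(M) \subseteq \sigma(A) + \phi$. Let $\lambda \in \sigma(M)$ with associated eigenvector $v \neq 0$, so that $Mv = \lambda v$. Then
\begin{equation*}
Av = (M - \phi I)v = Mv - \phi v = (\lambda - \phi) v,
\end{equation*}
which shows that $\lambda - \phi$ is an eigenvalue of $A$, i.e., $\lambda - \phi \in \sigma(A)$, so $\lambda \in \sigma(A) + \phi$. The reverse inclusion $\sigma(A) + \phi \subseteq \sigma(M)$ follows by the symmetric argument: if $Av = \mu v$ for some nonzero $v$, then $Mv = (A + \phi I)v = (\mu + \phi)v$, so $\mu + \phi \in \sigma(M)$.

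Alternatively, one could note directly that $\det(M - \lambda I) = \det(A - (\lambda - \phi) I)$ for every $\lambda \in \mathbb{C}$, so the roots of the characteristic polynomial of $M$ are precisely the roots of the characteristic polynomial of $A$ translated by $\phi$, with the same algebraic multiplicities.

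There is no real obstacle here, the statement is essentially the spectral mapping theorem for affine functions applied to a very simple case. The only minor care needed is to treat the equality as a multiset equality if one wants to preserve multiplicities, which the characteristic polynomial argument handles automatically; as a set equality, the eigenvector argument above suffices.
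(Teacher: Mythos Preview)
Your proof is correct; both the eigenvector argument and the characteristic polynomial identity $\det(M-\lambda I)=\det(A-(\lambda-\phi)I)$ establish the claim cleanly. The paper itself states this lemma without proof, treating it as an elementary preliminary fact, so there is no approach to compare against.
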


\noindent The following results from Chapter 2 of \cite{varga1962iterative} for non-negative matrices, which also hold for Metzler matrices by Lemma \ref{lem:metzler}, with $\phi = \min \{ 0, m_{11}, \dots, m_{nn} \}$, will be used in the subsequent analysis.

\begin{lemma}
\label{lem:stricly_pos_vec_M}
(Lemma 2.3 in \cite{varga1962iterative}) Suppose that $M$ is an irreducible Metzler matrix. Then, $s(M)$ is a simple eigenvalue of $M$ and there exists a unique (up to scalar multiple) vector $x \gg \mathbf{0}$ such that $Mx=s(M)x$.
\end{lemma}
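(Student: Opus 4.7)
The plan is to reduce this Metzler result to the classical Perron--Frobenius theorem for irreducible non-negative matrices via the shift identity in Lemma~\ref{lem:metzler}.

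First, I would choose the shift $\phi = \min\{0, m_{11}, \dots, m_{nn}\}$ as suggested in the excerpt and set $A := M - \phi I$. Since $M$ is Metzler, its off-diagonal entries are non-negative, and by construction the diagonal entries of $A$ satisfy $a_{ii} = m_{ii} - \phi \ge 0$. Hence $A$ is entrywise non-negative. Moreover, $A$ and $M$ have the same off-diagonal sparsity pattern, so $A$ inherits irreducibility from $M$.

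Next, I would invoke the Perron--Frobenius theorem for irreducible non-negative matrices (the statement for $A$ that sits immediately before Lemma 2.3 in \cite{varga1962iterative}): the spectral radius $\rho(A)$ is a simple eigenvalue of $A$, and there exists a unique (up to positive scaling) eigenvector $x \gg \mathbf{0}$ with $Ax = \rho(A)\, x$. Because every eigenvalue of a non-negative matrix is bounded in modulus by $\rho(A)$, and since $\rho(A)$ is itself real and an eigenvalue, we also have $s(A) = \rho(A)$.

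Finally, I would transfer these conclusions back to $M$ using Lemma~\ref{lem:metzler}, which gives $\sigma(M) = \sigma(A) + \phi$. Shifting preserves both algebraic multiplicity and real-part ordering, so $s(M) = s(A) + \phi = \rho(A) + \phi$ is a simple eigenvalue of $M$. The eigenvector is preserved under the shift since
\begin{equation*}
    M x = (A + \phi I)\, x = \rho(A)\, x + \phi\, x = s(M)\, x,
\end{equation*}
and uniqueness (up to scalar multiple) of the strictly positive eigenvector for $M$ at $s(M)$ follows from the corresponding uniqueness statement for $A$ at $\rho(A)$.

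The main obstacle here is not computational but expository: the only nontrivial ingredient is the irreducible Perron--Frobenius theorem itself, which is invoked as a citation from \cite{varga1962iterative}. Once that is accepted, the argument is essentially a one-line reduction via the shift in Lemma~\ref{lem:metzler}, and the only care needed is to verify that irreducibility and the correspondence between eigenvalues/eigenvectors are preserved under the shift $M \mapsto M - \phi I$.
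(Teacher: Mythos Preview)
Your proposal is correct and matches the paper's own treatment exactly: the paper does not give a standalone proof but cites Lemma~2.3 of \cite{varga1962iterative} for non-negative matrices and remarks that the Metzler case follows via the shift $\phi = \min\{0, m_{11}, \dots, m_{nn}\}$ from Lemma~\ref{lem:metzler}. Your write-up simply makes that reduction explicit, so there is nothing to add.
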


\begin{proposition}
\label{prop:spec_rad}
Suppose that $\Lambda$ is a negative diagonal matrix in $\mathbb{R}^{n \times n}$ and $N$ is an irreducible non-negative matrix in $\mathbb{R}^{n \times n}$. Let $M = \Lambda+N$. Then, $s(M) < 0$ if and only if $\rho(-\Lambda^{-1}N)<1$, $s(M) = 0$ if and only if $\rho(-\Lambda^{-1}N)=1$, and $s(M) > 0$ if and only if $\rho(-\Lambda^{-1}N)>1$. 
\end{proposition}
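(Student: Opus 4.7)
The plan is to compare left and right Perron eigenvectors directly and extract all three equivalences from a single line of algebra. First observe that $-\Lambda^{-1}N$ is irreducible and non-negative (hence Metzler), since $-\Lambda^{-1}$ is a positive diagonal matrix and $N$ is irreducible non-negative; similarly, $M = \Lambda + N$ is irreducible Metzler, and so is $M^\top$. I would apply Lemma \ref{lem:stricly_pos_vec_M} to $-\Lambda^{-1}N$, whose largest real eigenvalue coincides with $\rho(-\Lambda^{-1}N)$ by Perron-Frobenius for non-negative matrices, to produce $v \gg \mathbf{0}$ with $-\Lambda^{-1}Nv = r\,v$, writing $r := \rho(-\Lambda^{-1}N)$ for brevity. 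Applying the same lemma to $M^\top$ supplies $w \gg \mathbf{0}$ with $w^\top M = s(M)\,w^\top$, using $s(M^\top)=s(M)$.

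Multiplying the right-eigenvector relation by $-\Lambda$ gives $Nv = r(-\Lambda)v$, so
\[
Mv \;=\; \Lambda v + Nv \;=\; (1-r)\,\Lambda v.
\]
Taking inner product with $w$ and using $w^\top M = s(M) w^\top$ yields
\[
s(M)\,w^\top v \;=\; w^\top M v \;=\; (1-r)\,w^\top \Lambda v.
\]
Since $w,v \gg \mathbf{0}$, we have $w^\top v > 0$, and since $\Lambda$ is strictly negative diagonal, $w^\top \Lambda v = \sum_i \lambda_{ii}\,w_i v_i < 0$. Therefore $s(M) = (1-r)\bigl(w^\top \Lambda v\bigr)\big/\bigl(w^\top v\bigr)$, which is a \emph{positive} multiple of $r-1$. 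Hence $s(M)$ and $r-1$ share the same sign (and vanish together), which is exactly the three claimed equivalences.

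The only potentially delicate step is producing a strictly positive \emph{left} eigenvector $w$ of $M$, since Lemma \ref{lem:stricly_pos_vec_M} as stated returns only right eigenvectors; I handle this by applying the lemma to $M^\top$, noting that irreducibility transfers under transposition (the associated directed graph simply has its edges reversed) and that Metzler structure is preserved. Everything else is a direct computation, so the proof really does reduce to verifying that the two Perron eigenvectors can be paired in the displayed way.
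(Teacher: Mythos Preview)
The paper does not actually supply its own proof of Proposition~\ref{prop:spec_rad}; it is stated among the preliminaries as a result imported from Chapter~2 of Varga and extended to Metzler matrices via the shift in Lemma~\ref{lem:metzler}. So there is no in-paper argument to compare against.

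Your proof is correct and self-contained. The key move---pairing the Perron right eigenvector $v$ of $-\Lambda^{-1}N$ with the Perron left eigenvector $w$ of $M$ and reading off the sign from the scalar identity
\[
s(M)\,w^\top v \;=\; (1-r)\,w^\top\Lambda v
\]
is clean and sidesteps the longer M-matrix or regular-splitting arguments one often sees for this fact. Your handling of the left eigenvector by applying Lemma~\ref{lem:stricly_pos_vec_M} to $M^\top$ is legitimate: the Metzler property and irreducibility are both preserved under transposition, and $\sigma(M^\top)=\sigma(M)$ gives $s(M^\top)=s(M)$.

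One small remark for completeness: when you invoke Lemma~\ref{lem:stricly_pos_vec_M} on $-\Lambda^{-1}N$, you are using that for an irreducible \emph{non-negative} matrix the Perron root is simultaneously $s(\cdot)$ and $\rho(\cdot)$. You do state this, but since the paper's lemma is phrased only in terms of $s(\cdot)$, it is worth flagging explicitly that this identification is the standard Perron--Frobenius statement rather than something supplied by the paper's preliminaries.
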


\begin{lemma}
\label{lem:diag_dominant}
(Levy–Desplanques Theorem) A strictly diagonally dominant matrix is non-singular. In other words, let $A \in C^{n \times n}$ be a matrix satisfying the property
\begin{equation*}
    |a_{ii}| > \sum_{j \neq i} |a_{ij}|, \,\, \forall i;
\end{equation*}
\noindent then $\det (A) \neq 0$.
\end{lemma}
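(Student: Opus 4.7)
The plan is to prove the Levy--Desplanques Theorem by contradiction, using a standard maximum-component argument on the kernel vector. If $A$ is singular, then there exists some nonzero vector $x \in \mathbb{C}^n$ with $Ax = \mathbf{0}$, and I will show that this forces weak diagonal dominance at some row, contradicting the hypothesis.

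Concretely, I would first choose an index $i \in [n]$ such that $|x_i| = \max_j |x_j|$; since $x \neq \mathbf{0}$, this maximum value is strictly positive. The $i$-th equation of $Ax = \mathbf{0}$ rearranges to $a_{ii} x_i = -\sum_{j \neq i} a_{ij} x_j$. Taking absolute values and applying the triangle inequality yields
\begin{equation*}
|a_{ii}|\,|x_i| \;=\; \Bigl|\sum_{j \neq i} a_{ij} x_j \Bigr| \;\leq\; \sum_{j \neq i} |a_{ij}|\,|x_j| \;\leq\; |x_i| \sum_{j \neq i} |a_{ij}|,
\end{equation*}
where the last step uses the maximality of $|x_i|$. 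Dividing both sides by $|x_i| > 0$ gives $|a_{ii}| \leq \sum_{j \neq i} |a_{ij}|$, directly contradicting the strict diagonal dominance hypothesis. Hence $A$ must be non-singular, i.e.\ $\det(A) \neq 0$.

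There is no substantive obstacle here: the argument is a classical application of the triangle inequality combined with the pigeonhole-style choice of the row corresponding to the largest-magnitude entry of the kernel vector. The only care needed is to ensure $|x_i| > 0$ before dividing, which is immediate from $x \neq \mathbf{0}$, and to note that the proof works over $\mathbb{C}$ because the triangle inequality and the notion of maximum modulus both carry over without change.
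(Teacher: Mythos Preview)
Your proof is correct and is the standard classical argument for the Levy--Desplanques Theorem. The paper itself does not prove this lemma; it is stated in the preliminaries as a known result and invoked later without justification, so there is no paper proof to compare against.
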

\begin{lemma}
\label{lem:eig_pert}
(Observation 6.3.1 in \cite{horn2012matrix}) Let $M \in \R^{n \times n}$ be diagonalizable with $M = S \Lambda S^{-1}$ and $\Lambda = diag(\lambda_1, \dots, \lambda_n)$ and let $E \in \R^{n \times n}$. If $\hat{\lambda}$ is an eigenvalue of $M+E$, then there exists some eigenvalue $\lambda_i$ of $M$ for which
\begin{align*}
    |\hat{\lambda} - \lambda_i| \leq {||S||}_{\infty}{||S^{-1}||}_{\infty}{||E||}_{\infty} = \kappa_{\infty}(S){||E||}_{\infty},
\end{align*}
where $\kappa_{\infty}(S)$ denotes the condition number with respect to the infinity matrix norm ${|| \cdot ||}_{\infty}$.
\end{lemma}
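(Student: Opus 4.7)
The statement is the Bauer–Fike theorem, specialized here to the infinity norm. My plan is to split into two cases depending on whether the perturbed eigenvalue $\hat{\lambda}$ already lies in the spectrum of $M$, and to reduce the nontrivial case to a norm inequality that forces one of the diagonal entries of $(\hat{\lambda} I - \Lambda)^{-1}$ to be large, which in turn forces some $\lambda_i$ to be close to $\hat{\lambda}$.

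First I would dispose of the easy case: if $\hat{\lambda} = \lambda_i$ for some $i$, choose that $\lambda_i$ and the inequality holds trivially with zero on the left. For the remaining case, assume $\hat{\lambda} \notin \sigma(M)$, so $\hat{\lambda} I - \Lambda$ is invertible. Let $v \neq 0$ be an eigenvector of $M+E$ corresponding to $\hat{\lambda}$, so $(M+E)v = \hat{\lambda} v$, which I would rearrange to $E v = (\hat{\lambda} I - M) v$. Substituting $M = S\Lambda S^{-1}$ and multiplying on the left by $S^{-1}$ yields
\begin{equation*}
S^{-1} E v \;=\; (\hat{\lambda} I - \Lambda) S^{-1} v.
\end{equation*}
Setting $w := S^{-1} v$ (which is nonzero since $S^{-1}$ is invertible) and using invertibility of $\hat{\lambda} I - \Lambda$, I get $w = (\hat{\lambda} I - \Lambda)^{-1} S^{-1} E S w$.

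Next I would take infinity norms on both sides and use submultiplicativity to obtain
\begin{equation*}
\|w\|_\infty \;\le\; \bigl\| (\hat{\lambda} I - \Lambda)^{-1} \bigr\|_\infty \, \|S^{-1}\|_\infty \, \|E\|_\infty \, \|S\|_\infty \, \|w\|_\infty.
\end{equation*}
Dividing by $\|w\|_\infty > 0$ gives $1 \le \| (\hat{\lambda} I - \Lambda)^{-1} \|_\infty \, \kappa_\infty(S) \, \|E\|_\infty$. Since $(\hat{\lambda} I - \Lambda)^{-1}$ is diagonal, its infinity-norm equals $\max_i 1/|\hat{\lambda} - \lambda_i| = 1/\min_i |\hat{\lambda} - \lambda_i|$. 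Rearranging yields $\min_i |\hat{\lambda} - \lambda_i| \le \kappa_\infty(S)\,\|E\|_\infty$, and picking the minimizing index produces the claimed $\lambda_i$.

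The only real obstacle is conceptual bookkeeping: one must be careful that $w = S^{-1}v$ is nonzero (guaranteed by invertibility of $S^{-1}$), and that the infinity-norm of a diagonal matrix is truly the maximum of the moduli of its diagonal entries (which I would justify by a direct row-sum computation). Everything else is a mechanical application of submultiplicativity of the induced matrix norm and the diagonalization hypothesis; no spectral or positivity machinery from the earlier lemmas is required.
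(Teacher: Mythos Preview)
Your proof is correct and is precisely the standard Bauer--Fike argument. The paper itself does not prove this lemma; it is quoted without proof as Observation~6.3.1 from Horn and Johnson, so there is no paper-side proof to compare against. Your argument matches the one found in that reference essentially line for line.
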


\section{Network Flows Model Definition} \label{sec:model}
We now present a networked SEIRS model incorporating the population flow of individuals between subpopulations.
First, consider a group of $n$ sub-populations in a graph, where each sub-population $i\in [n]$ is represented by a node
in the graph $\mathcal{G}$. We use the SEIRS model to describe how susceptible individuals in sub-population $i$ become exposed, infected, recover, and gradually lose immunity 
as the result
of an infectious disease \cite{shu2012global}. 
We begin with defining the SEIRS model behavior without graph connections 
for each sub-population $i\in [n]$. Let $S_i$, $E_i$, $I_i$, and $R_i$ represent the number of susceptible, exposed, infected, and recovered individuals in sub-population $i$, respectively, and their dynamics evolve as

\begin{subequations}
\label{eq:seir_standard}
\begin{align}
        \dot{S}_i(t) &= \alpha_i R_i(t) -\beta_i \frac{I_i(t)}{N_i} S_i(t) \\
        \dot{E}_i(t) &= \beta_i \frac{I_i(t)}{N_i} S_i(t) - \sigma_i E_i(t) \\
        \dot{I}_i(t) &= \sigma_i E_i(t) - \delta_i I_i(t) \\
        \dot{R}_i(t) &= \delta_i I_i(t) - \alpha_i R_i(t),
\end{align}
\end{subequations}

\noindent where $\beta_i$ is the infection rate, $\sigma_i$ is \edit{the} transition rate from exposed to infected, $\delta_i$ is the healing rate, and $\alpha_i$ is the rate of immunity loss.
Although the \edit{state} variables, except population, will continue to vary with time, we remove the time-dependence notation for convenience and ease of reading from this point forward.

To account for the flow of individuals between sub-populations 
we expand the model
in~\eqref{eq:seir_standard}:
\begin{subequations}
\label{eq:seir_flows_indv}
\begin{align}
    \dot{S}_i &= -\beta_i \frac{I_i}{N_i} S_i + \edit{\alpha_i} R_i +\sum_{j \neq i} \left(F_{ij}\frac{S_j}{N_j}-F_{ji}\frac{S_i}{N_i} \right) \\
    \dot{E}_i &= \beta_i \frac{I_i}{N_i} S_i - \sigma_iE_i+ \sum_{j \neq i} \left(F_{ij}\frac{E_j}{N_j}-F_{ji}\frac{E_i}{N_i} \right) \\
    \dot{I}_i &= \sigma_iE_i - \delta_i I_i+ \sum_{j \neq i} \left(F_{ij}\frac{I_j}{N_j}-F_{ji}\frac{I_i}{N_i} \right) \\        
    \dot{R}_i &= \delta_i I_i - \edit{\alpha_i} R_i + \sum_{j \neq i} \left(F_{ij}\frac{R_j}{N_j}-F_{ji}\frac{R_i}{N_i} \right),
\end{align}
\end{subequations}

\noindent where $F_{ij}$ represents the number of individuals flowing from sub-population $j$  to $i$, with $F_{ii}=0$. By making a substitution of variables where $s_i = S_i/N_i, e_i = E_i/N_i, x_i = I_i/N_i, r_i = R_i/N_i$ we can model the proportion of individuals 
as follows
\begin{subequations}
\label{eq:seir_flows_proportion}
\begin{align}
    \dot{s}_i &=\alpha_i r_i -\beta_i x_i s_i +\frac{1}{N_i}\sum_{j \neq i} \left(F_{ij}s_j-F_{ji}s_i \right) \\
    \dot{e}_i &= \beta_i x_i s_i - \sigma_i e_i+ \frac{1}{N_i}\sum_{j \neq i} \left(F_{ij}e_j-F_{ji}e_i \right) \\
    \dot{x}_i &= \sigma_i e_i - \delta_i x_i+ \frac{1}{N_i}\sum_{j \neq i} \left(F_{ij}x_j-F_{ji}x_i \right) \\        
    \dot{r}_i &= \delta_i x_i- \alpha_i r_i + \frac{1}{N_i}\sum_{j \neq i} \left(F_{ij}r_j-F_{ji}r_i \right), 
\end{align}
\end{subequations}
\noindent where $s_i+e_i+x_i+r_i=1$. Note that both \eqref{eq:seir_flows_indv} and \eqref{eq:seir_flows_proportion} 
assume the subpopulations are well mixed and that the likelihood of an individual traveling is independent of their infectious state, that is, whether they are susceptible, exposed, infected, or recovered. We can compute the number of individuals flowing from sub-population $j$ to $i$ as     
$F_{ij} = \gamma_j w_{ij} N_j$,
where $\gamma_j$ is the \edit{proportion of the total population} flowing out of node $j$ computed as
\edit{$\gamma_j = \frac{\sum_{i \neq j}F_{ij}}{N_j}$},
and $w_{ij}$ is the proportion of traveling individuals flowing from sub-population $j$ to $i$ computed as
$w_{ij} = \frac{F_{ij}}{\sum_{l \neq j}F_{lj}}$,
with $w_{ii} = 0$. 
Thus, we can further derive the dynamics for the susceptible proportion at sub-population $i$ as

\begin{align*}
    \dot{s}_i &= \alpha_i r_i -\beta_i x_i s_i +\frac{1}{N_i}\sum_{j \neq i} \left(F_{ij}s_j-F_{ji}s_i \right) \\
     &= \alpha_i r_i -\beta_i x_i s_i +\frac{1}{N_i}\sum_{j \neq i} \left(\gamma_j w_{ij} N_j s_j-\gamma_i w_{ji} N_i s_i \right) \\
     &= \alpha_i r_i -\beta_i x_i s_i +\sum_{j \neq i} \left(\frac{N_j}{N_i} w_{ij}\gamma_j s_j- w_{ji} \gamma_i s_i \right).
\end{align*}
\noindent 
Using the fact that $\sum_{j \neq i}w_{ji} = 1$, we have that
\begin{equation*}
     \dot{s}_i = \alpha_i r_i -(\beta_i x_i+\gamma_i) s_i +\sum_{j \neq i} \frac{N_j}{N_i} w_{ij}\gamma_j s_j.
\end{equation*}
\noindent
By similar derivations we can rewrite \eqref{eq:seir_flows_proportion} as
\begin{subequations}
\label{eq:flows_ind_node_cont}
\begin{align}
     \dot{s}_i &= \alpha_i r_i -(\beta_i x_i+\gamma_i) s_i +\sum_{j \neq i} \frac{N_j}{N_i} w_{ij}\gamma_j s_j \label{eq:flows_ind_node_cont_s} \\
     \dot{e}_i &= \beta_i x_i s_i - (\sigma_i+\gamma_i)e_i + \sum_{j \neq i} \frac{N_j}{N_i} w_{ij}\gamma_j e_j \label{eq:flows_ind_node_cont_e} \\
     \dot{x}_i &= \sigma_i e_i - (\delta_i+\gamma_i)x_i + \sum_{j \neq i} \frac{N_j}{N_i} w_{ij}\gamma_j x_j \label{eq:flows_ind_node_cont_x} \\
     \dot{r}_i &= \delta_i x_i -(\gamma_i+\alpha_i)r_i +\sum_{j \neq i} \frac{N_j}{N_i} w_{ij}\gamma_j r_j \label{eq:flows_ind_node_cont_r}
\end{align}
\end{subequations}

\begin{figure}
    \centering
    \begin{overpic} [width=.6\columnwidth]{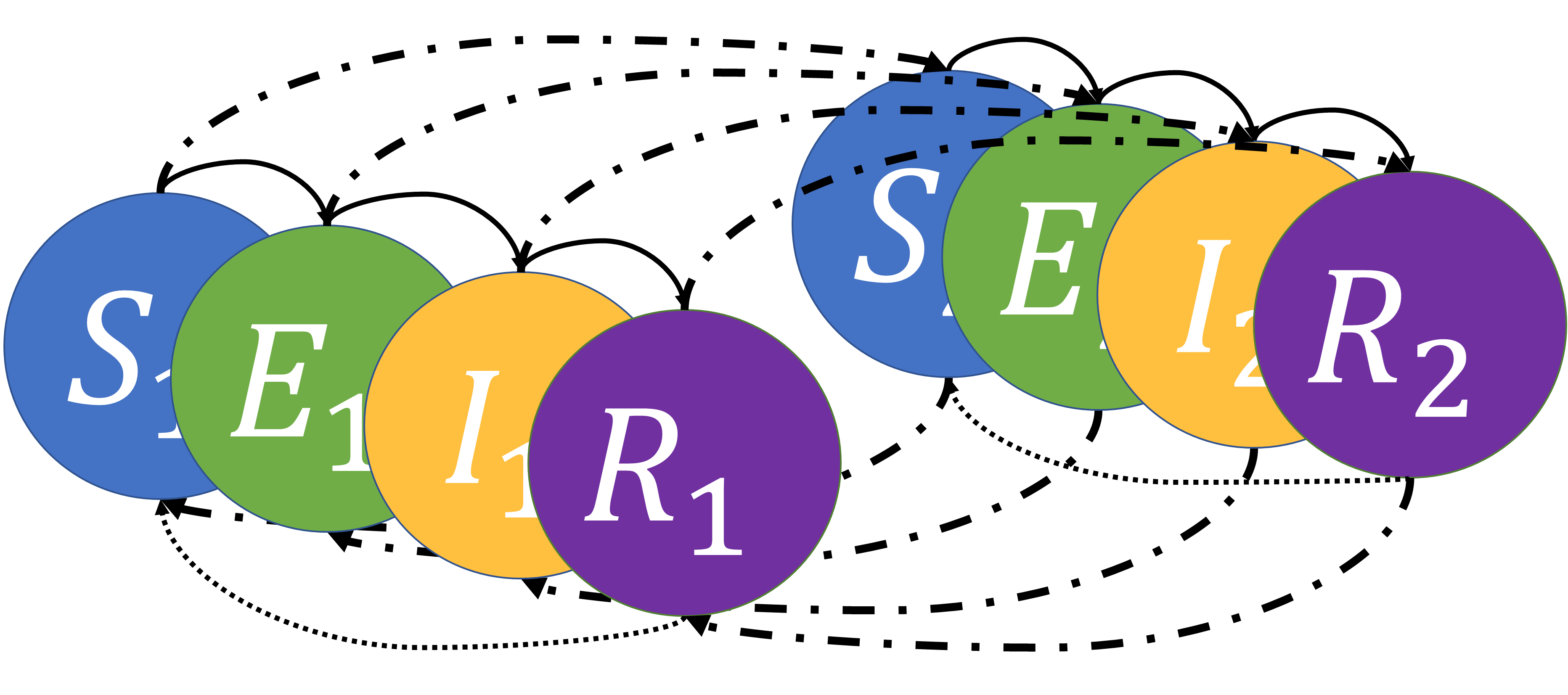}
    \put(40,43.5){\small $\gamma_1$}
    \put(18,35){\small $\beta_1$}
    \put(29,32){\small $\sigma_1$}
    \put(40,29){\small $\delta_1$}
    \put(22,5){\small $\alpha_1$}
    \put(65,43){\small $\beta_2$}
    \put(74,41){\small $\sigma_2$}
    \put(85,38){\small $\delta_2$}
    \put(70.5,15){\small $\alpha_2$}
    \put(63,0){\small $\gamma_2$}
    \end{overpic}
    \caption{\edit{An illustration of a two-node SIERS network with travel flows as defined by \eqref{eq:flows_ind_node_cont}.}}
    \label{fig:two_node_diagram}
\end{figure}

\noindent \edit{where an illustration of this model for two nodes is shown in Figure~\ref{fig:two_node_diagram}. Note that our model does not allow for state transitions to occur during travel, i.e., we do not model individuals becoming infected as a result of their travel. Rather, individuals will change their state in the epidemic process at the rate defined at each node. Of course, this may not always be the case in reality, as some methods of public transportation may facilitate infection spreading for those sharing a confined space with inadequate ventilation. Therefore, this study implicitly focuses on more long-range modes of transportation that transfer individuals between more distinctly separated populations (such as travel via flights or long-rage commuting) where traveling individuals may not be interacting with other travelers, or where there is a sufficient amount of ventilation such that the spread of the disease is significantly reduced during transit \cite{hu2021flights}. The inclusion of infections occurring on the edges of transportation networks is left as a direction for future work.} 

For the model in \eqref{eq:flows_ind_node_cont} to be well-defined, we required the following assumptions.

\begin{assumption}
\label{assume:equal_flows}
Let $\sum_{i \neq j} F_{ji} = \sum_{i \neq j}F_{ij}$ for all $i,j \in [n]$.
\end{assumption}

\noindent This assumption requires that the total flow of individuals into a given sub-population must be equal to the total flow out, which naturally follows from the assumption that the population at each node remains constant. 
Furthermore, we impose the following assumption on the model parameters and initial conditions.

\begin{assumption}
\label{assume:parameters}
Let $\alpha_i, \beta_i, \sigma_i, \delta_i \in \mathbb{R}_{>0}$, $s_i(t_0),e_i(t_0),x_i(t_0),r_i(t_0) \in [0,1]$, and $s_i(t_0)+e_i(t_0)+x_i(t_0)+r_i(t_0) =1$ for all $i \in [n]$ and $t_0 \in \mathbb{R}_{\geq 0}$. 
\end{assumption}
\noindent
Under these assumptions, we can show that the model will always remain well-defined.

\begin{lemma}
\label{lem:welldefined}
Let  Assumptions \ref{assume:equal_flows}-\ref{assume:parameters} hold, then $s_i(t),e_i(t),x_i(t),r_i(t) \in [0,1]$, and $s_i(t)+e_i(t)+x_i(t)+r_i(t) =1$ for all $i \in [n]$ and for all $t \in \mathbb{R}_{\geq 0}$.
\end{lemma}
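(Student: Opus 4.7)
The plan is to establish two things in sequence: (i) the per-node total $n_i(t) := s_i(t)+e_i(t)+x_i(t)+r_i(t)$ is conserved equal to $1$, and (ii) each individual state stays non-negative. Combining the two immediately pins every state variable in $[0,1]$.

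First, I would sum the four equations in \eqref{eq:flows_ind_node_cont} at each node $i$. All of the reactive terms $\alpha_i r_i$, $\beta_i x_i s_i$, $\sigma_i e_i$, and $\delta_i x_i$ cancel in pairs, leaving the purely linear flow system
\begin{equation*}
\dot{n}_i \;=\; -\gamma_i\, n_i \;+\; \sum_{j \neq i} \frac{N_j}{N_i}\, w_{ij}\, \gamma_j\, n_j.
\end{equation*}
Using $\gamma_j w_{ij} N_j = F_{ij}$ and $\gamma_i N_i = \sum_{j \neq i} F_{ji}$, Assumption \ref{assume:equal_flows} gives $\gamma_i N_i = \sum_{j \neq i} F_{ij}$, so substituting the ansatz $n_j \equiv 1$ yields $\dot{n}_i = 0$. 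Thus the constant function $n(t) \equiv \mathbf{1}$ solves this linear ODE with the initial condition supplied by Assumption \ref{assume:parameters}; by uniqueness of solutions to linear ODEs, $n_i(t)=1$ for all $t \geq t_0$ and all $i \in [n]$.

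Next I would show forward invariance of the non-negative orthant by a standard tangent-cone (Nagumo-type) argument: it suffices to check that at any boundary point of $\mathbb{R}_{\geq 0}^{4n}$ where a single coordinate vanishes, the corresponding time derivative is non-negative. Inspecting each equation in \eqref{eq:flows_ind_node_cont}, setting the ``own'' state to zero kills the outflow/reaction loss term and leaves only non-negative contributions: if $s_i = 0$ then $\dot{s}_i = \alpha_i r_i + \sum_{j \neq i} \tfrac{N_j}{N_i} w_{ij} \gamma_j s_j \geq 0$, and analogously for $e_i$, $x_i$, $r_i$, each of which receives a non-negative reactive inflow plus non-negative cross-node flow. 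Hence no component can cross zero from above, and combined with $n_i(t)=1$ this forces $s_i, e_i, x_i, r_i \in [0,1]$ for all $t \geq t_0$.

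The main obstacle I anticipate is the conservation step: the cancellation of the inter-node flow in $\dot{n}_i$ relies simultaneously on the ansatz $n_j \equiv 1$ at \emph{every} node $j$ and on the flow-balance of Assumption \ref{assume:equal_flows}, so one cannot argue $\dot{n}_i = 0$ pointwise in time before knowing all the other $n_j$'s. The clean way around this is to present $n \equiv \mathbf{1}$ as a particular solution of the linear flow-network ODE and then invoke uniqueness, as above. The non-negativity step is more routine but must be phrased as a joint tangent condition on the full $4n$-dimensional non-negative orthant, because the derivative at node $i$ depends on states at other nodes.
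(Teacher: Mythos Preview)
Your approach mirrors the paper's: establish $s_i+e_i+x_i+r_i \equiv 1$ first, then non-negativity by checking that each derivative is non-negative when its own coordinate vanishes. Your conservation step is in fact cleaner than the paper's---the paper asserts $\dot s_i+\dot e_i+\dot x_i+\dot r_i=0$ directly ``by Assumption~\ref{assume:equal_flows},'' which tacitly presumes the neighboring totals already equal $1$, whereas your framing (the totals satisfy a closed linear ODE, $\mathbf 1$ is a particular solution, invoke uniqueness) avoids that circularity.
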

\begin{proof}
First, by taking the sum of the system dynamics as defined in \eqref{eq:seir_flows_proportion} we have  
\begin{align}\label{eq:sum_der_eq_0}
    \dot{s}_i(t)+\dot{e}_i(t)+\dot{x}_i(t)+\dot{r}_i(t) = 0,
\end{align}
by Assumption \ref{assume:equal_flows} at every time $t$. Thus, by integrating 
\eqref{eq:sum_der_eq_0} from $t_0$ to $t$ we have
\begin{align*}
    \int_{t_0}^t \dot{s}_i(\tau)+\dot{e}_i(\tau)+\dot{x}_i(\tau)+\dot{r}_i(\tau) d\tau = 0,\\
    \implies s_i(t)+e_i(t)+x_i(t)+r_i(t) =1 , t \in \mathbb{R}_{\geq 0}
\end{align*}
by Assumption \ref{assume:parameters}. To show that $s_i(t),e_i(t),x_i(t),r_i(t) \in [0,1]$, we consider the system  as each state approaches zero. Note that $\lim_{q_i(t) \rightarrow 0}\dot{q}_i(t) \geq 0$ where $q_i \in \{s_i,e_i,x_i,r_i\}$. Thus, by the assumption that the initial states are non-negative, we have that  $q_i(t) \geq 0, \forall t \geq 0$. Further, since we have shown that $s_i(t)+e_i(t)+x_i(t)+r_i(t) =1$ it follows directly that $s_i(t),e_i(t),x_i(t),r_i(t) \in [0,1], \forall t \geq 0$ and $\forall i \in [n]$.
\end{proof}

For the purpose of the forthcoming analysis, we also express the system in \eqref{eq:flows_ind_node_cont} in matrix form. Let $\Phi = N^{-1}W \Gamma N$, where $N = diag(N_i)$, $W \in \mathbb{R}^{n \times n}$ is the matrix comprised of the entries $w_{ij}$, and $\Gamma = diag(\gamma_i)$. The vectorized equations then become
\begin{subequations}
\label{eq:matrix_eq}
\begin{align}
     \dot{s} &= A r -(B X(t)+\Gamma) s +\Phi s  \\
     \dot{e} &= B X(t) s - (\Sigma+\Gamma)e + \Phi e \\
     \dot{x} &= \Sigma e - (D+\Gamma)x + \Phi x \\
     \dot{r} &= D x -(A+\Gamma)r +\Phi r,
\end{align}
\end{subequations}
where $A = diag(\alpha_i)$, $B = diag(\beta_i)$, $\Sigma = diag(\sigma_i)$, $D= diag(\delta_i)$, $X(t) = diag(x_i)$, $s = [s_1, \dots , s_n]^\top$, $e = [e_1, \dots , e_n]^\top$, $x = [x_1, \dots , x_n]^\top$, and $r = [r_1, \dots , r_n]^\top$. 
We further construct the matrix
\begin{equation}
\footnotesize
\underbrace{
    \begin{bmatrix}
    -BX(t)-\Gamma+\Phi & 0 & 0 & A \\
    BX(t) & -\Sigma-\Gamma+\Phi & 0 & 0 \\
    0 & \Sigma & -D-\Gamma+\Phi & 0 \\
    0 & 0 & D & -A-\Gamma+\Phi
    \end{bmatrix}
    }_{H(t)},
\end{equation}
where
\begin{equation}
    H(t) = -Q(t)+M(t),
\end{equation}
with
\begin{equation}
    Q(t)=
    \begin{bmatrix}
    \edit{BX(t)}+\Gamma &  &  &  \\
    & \Sigma+\Gamma &  &  \\
    &  & D+\Gamma & \\
    &  &  & A+\Gamma
    \end{bmatrix},
\end{equation}
\begin{align}
    M(t) &=
    \begin{bmatrix}
    \Phi & 0 & 0 & A \\
    BX(t) & \Phi & 0 & 0 \\
    0 & \Sigma & \Phi & 0 \\
    0 & 0 & D & \Phi
    \end{bmatrix},
\end{align}
and let the complete state vector be defined as
\begin{equation}
\label{eq:full_state_vec}
    z = \begin{bmatrix} s^\top & e^\top & x^\top & r^\top \end{bmatrix}^\top.
\end{equation}
Thus, 
we can describe the state dynamics as
\begin{equation}
\label{eq:matrix_form}
    \dot{z} = \big( -Q(t) + M(t)\big) z,
\end{equation}
where $Q(t)$ is a positive diagonal matrix and $M(t)$ is a non-negative matrix $\forall t \geq 0$. 
Note that the time-dependence of $Q(t)$ and $M(t)$ comes from the fact that they are functions of $X(t)$, where the indication of time dependence is left for emphasis of this fact.

\begin{definition}
A graph $\mathcal{G}(t)= (\mathbb{V},\cup_{t\geq 0}\mathbb{E}(t),\mathbb{W})$ for $t \in \mathbb{R}_{\geq 0}$ is \emph{$K$-strongly connected} if there exist some bound $K$ such that   
$(\mathbb{V},\cup_{j=t}^{t+K-1} \mathbb{E}(j), \mathbb{W})$
is strongly connected, for all $t \in \mathbb{R}_{\geq 0}$.
\end{definition}
\begin{assumption} \label{assume:strongly_connected_bounded_comms}
Let the graph $\mathcal{G}(t) = (\mathbb{V},\cup_{t\geq 0}\mathbb{E}(t),\mathbb{W})$, 
where 
$\mathbb{W}:\mathbb{E}(t)\rightarrow \mathbb{R}_{>0}$ is defined by
$w_{ij}(t)$, be $K$-strongly connected.
\end{assumption}

\begin{lemma}
\label{lem:irreducible}
Given Assumptions \ref{assume:parameters}-\ref{assume:strongly_connected_bounded_comms}, if $\exists t_0 \geq 0$ and $\exists i \in [n]$ such that $x_i(t_0) > 0$, the matrix $M(t_0)$ is irreducible.
\end{lemma}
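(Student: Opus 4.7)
The plan is to show that the directed graph $\mathcal{H}$ whose weighted adjacency matrix is $M(t_0)$ is strongly connected, which is equivalent to $M(t_0)$ being irreducible. I label the $4n$ nodes of $\mathcal{H}$ as pairs $(k,i)$ with $k \in \{S,E,I,R\}$ indicating the block and $i \in [n]$ indicating the sub-population. The edges can be read off directly from the block structure of $M(t_0)$: the four diagonal $\Phi$-blocks give intra-compartment edges $(k,j)\to(k,i)$ whenever $w_{ij}(t_0)>0$ (since $\Phi = N^{-1}W\Gamma N$ and both $N,\Gamma$ are positive diagonal, so $\Phi$ inherits the zero pattern of $W$), while the off-diagonal blocks give the inter-compartment edges $(E,i)\to(I,i)$, $(I,i)\to(R,i)$, and $(R,i)\to(S,i)$, all of which are present for every $i$ by Assumption~\ref{assume:parameters}. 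The remaining inter-compartment edges $(S,i)\to(E,i)$ coming from the block $BX(t_0)$ exist precisely for those $i$ with $x_i(t_0)>0$.

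Next I would establish intra-compartment strong connectivity: by Assumption~\ref{assume:strongly_connected_bounded_comms}, interpreted at the instant $t_0$, the flow graph on $[n]$ is strongly connected, so each of the four $\Phi$-subgraphs of $\mathcal{H}$ is itself strongly connected on its $n$ vertices. Combined with the ``three of four'' inter-compartment edges $E\to I\to R\to S$ present at every sub-population, this already lets me reach every $(S,l)$ node from any starting node $(k,i)$: first apply the appropriate prefix of $\Sigma,D,A$ at node $i$ to land on $(S,i)$, then traverse the $\Phi$ block of the $S$-compartment to reach $(S,l)$.

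For the final step I would invoke the hypothesis. Let $j$ be an index with $x_j(t_0)>0$, which is guaranteed to exist; then the edge $(S,j)\to(E,j)$ is present in $\mathcal{H}$. Given any target $(k',l)$, start at any $(k,i)$, route to $(S,j)$ by the previous step, traverse $(S,j)\to(E,j)$, cross the $E$-compartment $\Phi$-block to $(E,l)$, and then walk forward through $\Sigma$ and $D$ as needed to reach $(I,l)$, $(R,l)$, or loop once more to $(S,l)$. Since every node of $\mathcal{H}$ is reachable from every other node, $\mathcal{H}$ is strongly connected, so $M(t_0)$ is irreducible.

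The main subtlety is handling the case in which $x_i(t_0)=0$ for most $i$: then nearly every $(S,i)$ node has no direct edge into the $E$-layer, and all reachability into and out of the $E,I,R$ layers must be funneled through a ``gateway'' sub-population $j$ where $x_j(t_0)>0$. This is exactly the point at which the strong connectivity of the $\Phi$-block becomes essential, and it is also why the existential hypothesis $\exists i: x_i(t_0)>0$ cannot be dropped. I would be careful when writing up to state explicitly how the $K$-strongly connected assumption is being applied at the single instant $t_0$ (or, equivalently, to note that only the sparsity pattern of $W$ at $t_0$ is needed, together with its strong connectivity).
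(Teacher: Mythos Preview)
Your proposal is correct and, in fact, more carefully argued than the paper's own proof. The paper proceeds by contradiction: it supposes $M(t_0)$ is reducible, observes that then $M(t_0)-Q(t_0)$ is reducible (since $Q$ is diagonal), and then argues that the $4n$-node graph decomposes into $n$ sub-population blocks, each of which is the strongly connected single-node SEIRS graph (illustrated explicitly for $n=1$), with the $\Phi$ blocks linking them; reducibility would then force some sub-population to be unreachable via travel, contradicting Assumption~\ref{assume:strongly_connected_bounded_comms}. Your direct strong-connectivity argument with explicit routing through a ``gateway'' index $j$ where $x_j(t_0)>0$ is the same idea underneath, but it handles the case where most $x_i(t_0)=0$ much more transparently: the paper's claim that each sub-population block is itself strongly connected is only literally true when $x_i(t_0)>0$ at that $i$, whereas your routing makes clear that a single positive $x_j$ suffices because the $\Phi$ layers carry you to and from $j$.

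The caveat you raise at the end about how Assumption~\ref{assume:strongly_connected_bounded_comms} ($K$-strong connectivity) is being invoked at a single instant $t_0$ is well taken and applies equally to the paper's proof; the paper does not address it either and simply treats the flow graph as strongly connected at $t_0$. In your write-up it would be fair to flag this exactly as you have, or to state the slightly stronger instantaneous-connectivity hypothesis that both arguments actually use.
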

\begin{proof}
Assume, by contradiction, that $M(t_0)$ is reducible. Then $M(t_0)-Q(t_0)$ will also be reducible, as $Q(t_0)$ is a diagonal matrix. Thus, the graph described by the adjacency matrix in \eqref{eq:matrix_form} cannot be strongly connected. However, we can partition the graph into $n$ strongly connected components where each component describes the evolution of the epidemic in each subpopulation. Consider the case where $n=1$, which reduces to a non-networked SEIRS model. By Assumption \ref{assume:parameters}, this system is strongly connected in the sense of the epidemic states given $x_1(t_0) > 0$ where
\begin{align*}
    \begin{bmatrix}
     \dot{s}_1 \\ \dot{e}_1 \\ \dot{x}_1 \\ \dot{r}_1
    \end{bmatrix}
    =
    \begin{bmatrix}
     -\beta_1 x_1(t_0) & 0 & 0 & \alpha_1 \\
     \beta_1 x_1(t_0) & -\sigma_1 & 0 & 0 \\
     0 & \sigma_1 & -\delta_1 & 0 \\
     0 & 0 & \delta_1 & -\alpha_1
    \end{bmatrix}
    \begin{bmatrix}
     s_1 \\ e_1 \\ x_1 \\ r_1
    \end{bmatrix}.
\end{align*}
For $n>1$, the network connections are defined by the total flows between each sub-population.  
Thus, for the system to be reducible, there must exist a sub-population that is unreachable via travel from at least one other sub-population in the graph, which contradicts Assumption \ref{assume:strongly_connected_bounded_comms}. Therefore, $M(t_0)$ must be irreducible.
\end{proof}
Note that the irreducibulity of $M(t)$ will play an important role in proving the existence of an endemic equilibrium, which will be proven in Section~\ref{sec:analysis}. 

\section{Network Flows Model Analysis} \label{sec:analysis}
In this section, we analyze the fundamental behaviors of our model, which is divided into the analysis of the healthy and the endemic states of the system in \eqref{eq:flows_ind_node_cont}. 

\subsection{Healthy State Analysis}
We first consider the existence of a disease-free, or healthy state of the system and when the stability of the healthy state can be ensured. We then examine the effect of manipulating travel flows on the stability of the healthy state and prove that there is no permissible perturbation of flows that will change the local stability of the healthy state for any system.
\begin{proposition} \label{prop:healthy_eq_exs}
The healthy state, $(s^*,e^*,x^*,r^*)=(\mathbf{1},\mathbf{0},\mathbf{0},\mathbf{0})$, is always an equilibrium of the system.
\end{proposition}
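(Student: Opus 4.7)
The plan is a direct verification: substitute $(s,e,x,r) = (\mathbf{1}, \mathbf{0}, \mathbf{0}, \mathbf{0})$ into the system \eqref{eq:flows_ind_node_cont} component-by-component and show that every time derivative vanishes. Three of the four equations are essentially immediate: when $e_i = x_i = r_i = 0$ for all $i$, the right-hand sides of \eqref{eq:flows_ind_node_cont_e}, \eqref{eq:flows_ind_node_cont_x}, and \eqref{eq:flows_ind_node_cont_r} each collapse to a sum of terms, each of which contains a factor of one of the zeroed-out states. So $\dot{e}_i, \dot{x}_i, \dot{r}_i$ are all zero at the healthy state by inspection.

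The one non-trivial check is \eqref{eq:flows_ind_node_cont_s}. Substituting $r_i = 0$, $x_i = 0$, and $s_j = 1$ for all $j$, the expression reduces to
\[
\dot{s}_i = -\gamma_i + \sum_{j \neq i} \frac{N_j}{N_i} w_{ij}\gamma_j.
\]
To see this equals zero, I would unpack the definitions $F_{ij} = \gamma_j w_{ij} N_j$ and $\gamma_i = \frac{\sum_{k \neq i} F_{ki}}{N_i}$, which give $\gamma_i N_i = \sum_{k \neq i} F_{ki}$ (total outflow from node $i$) and $\sum_{j \neq i} \frac{N_j}{N_i} w_{ij}\gamma_j = \frac{1}{N_i}\sum_{j \neq i} F_{ij}$ (total inflow into node $i$, normalized). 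The desired equality is then exactly Assumption~\ref{assume:equal_flows}: total inflow equals total outflow at every node.

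There is no real obstacle here; the only subtlety worth emphasizing is that the balancing of $\dot{s}_i$ is not automatic from the structure of the SEIRS compartments alone, but depends essentially on the flow-conservation Assumption~\ref{assume:equal_flows}. Once that identity is invoked, $\dot{s}_i = 0$ follows and the proposition is established for all $i \in [n]$.
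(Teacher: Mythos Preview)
Your proposal is correct and follows essentially the same approach as the paper: direct substitution of $(\mathbf{1},\mathbf{0},\mathbf{0},\mathbf{0})$ into the dynamics, immediate vanishing of $\dot{e}_i,\dot{x}_i,\dot{r}_i$, and use of the flow-balance Assumption~\ref{assume:equal_flows} to conclude $\dot{s}_i=0$. The only cosmetic difference is that the paper substitutes into the $F_{ij}$-form \eqref{eq:seir_flows_proportion} directly, whereas you substitute into the equivalent $\gamma$-$w$ form \eqref{eq:flows_ind_node_cont} and then unwind the definitions back to $F_{ij}$; the content is the same.
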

\begin{proof}
Substituting $(\mathbf{1},\mathbf{0},\mathbf{0},\mathbf{0})$ into \eqref{eq:seir_flows_proportion} yields
\begin{align*}
     \dot{s}_i = \frac{1}{N_i}\sum_{j \neq i} \left(F_{ij}-F_{ji} \right),
     \,\,  \dot{e}_i = 0 ,
     \,\, \dot{x}_i = 0 ,
     \,\, \dot{r}_i = 0
\end{align*}
for all $i \in  [n]$, where $\sum_{j \neq i} \left(F_{ij}-F_{ji} \right) = 0$ by Assumption~\ref{assume:strongly_connected_bounded_comms}. Thus, $(s^*,e^*,x^*,r^*)=(\mathbf{1},\mathbf{0},\mathbf{0},\mathbf{0})$ is always an equilibrium of the system given Assumptions \ref{assume:equal_flows} and \ref{assume:parameters}.
\end{proof}

We now consider the stability of the healthy state of the system and the conditions needed to reach this equilibrium. 

First, since $s_i+e_i+x_i+r_i = 1$ for all $i\in [n]$, we can rewrite the dynamics of \eqref{eq:matrix_eq} with respect to the exposed, infected, and recovered states as

\footnotesize
\begin{align} \label{eq:exr_matrix_dynamics}
    \begin{bmatrix}
     \dot{e} \\ \dot{x} \\ \dot{r}
    \end{bmatrix}
    =
    \underbrace{
    \begin{bmatrix}
    -\Sigma-\Gamma+\Phi & B(I - E - X - R) & 0 \\
    \Sigma & -D-\Gamma+\Phi & 0 \\
    0 & D & -A-\Gamma+\Phi
    \end{bmatrix}
    }_{\bar{H}}
    \begin{bmatrix}
     e \\ x \\ r
    \end{bmatrix},
\end{align}
\normalsize

\noindent
where $E = diag(e_i)$, $X = diag(x_i)$, and $R = diag(r_i)$. 
The Jacobian matrix of \eqref{eq:exr_matrix_dynamics} 
evaluated at an equilibrium $(x^*,e^*,r^*)$, denoted as $J_{(e^*, x^*,r^*)}$, is

\footnotesize
\begin{align}
\label{eq:jacobian}
    \begin{bmatrix}
     -\Sigma-\Gamma+\Phi - BX^* & B(I-E^*-2X^*-R^*) & -BX^* \\
    \Sigma & -D-\Gamma+\Phi & 0 \\
    0 & D & -A-\Gamma+\Phi
    \end{bmatrix}.
\end{align}
\normalsize
Further, we define the following matrix, the top 2x2 diagonal block of $J$ evaluated at $(\mathbf{0},\mathbf{0},\mathbf{0})$,
\begin{equation}
    U = 
    \begin{bmatrix}
     -\Sigma-\Gamma + \Phi & B \\
     \Sigma & -D - \Gamma + \Phi
    \end{bmatrix},
\end{equation}
which yields the following conditions for the local stability and instability of the healthy state.
\begin{proposition} \label{prop:healthy_stable}
If $s(U) < 0$,
then the healthy state of \eqref{eq:matrix_eq} is locally exponentially stable.
\end{proposition}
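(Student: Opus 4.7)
The plan is to apply Lyapunov's indirect method to the reduced $(e, x, r)$ system in \eqref{eq:exr_matrix_dynamics}, which captures the full dynamics because $s_i = 1 - e_i - x_i - r_i$ by Lemma~\ref{lem:welldefined}. Local exponential stability of the healthy equilibrium then reduces to showing the Jacobian $J_{(\mathbf{0}, \mathbf{0}, \mathbf{0})}$ in \eqref{eq:jacobian} is Hurwitz, i.e., $s(J_{(\mathbf{0}, \mathbf{0}, \mathbf{0})}) < 0$.

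First, substituting $E^* = X^* = R^* = \mathbf{0}$ into \eqref{eq:jacobian} kills the $-BX^*$ terms in the $(1,1)$ and $(1,3)$ positions and reduces $B(I - E^* - 2X^* - R^*)$ to $B$. The resulting Jacobian is block lower-triangular with diagonal blocks $U$ (occupying the top-left $2\times 2$ block) and $-A - \Gamma + \Phi$ (bottom-right), so $\sigma(J_{(\mathbf{0}, \mathbf{0}, \mathbf{0})}) = \sigma(U) \cup \sigma(-A - \Gamma + \Phi)$. The hypothesis supplies $s(U) < 0$, so what remains is to prove $s(-A - \Gamma + \Phi) < 0$.

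The crux is the conservation identity $(-\Gamma + \Phi)\mathbf{1} = \mathbf{0}$, which follows from Assumption~\ref{assume:equal_flows} by exactly the row-by-row calculation performed in the proof of Proposition~\ref{prop:healthy_eq_exs}. Because $-\Gamma + \Phi$ is Metzler and irreducible under Assumption~\ref{assume:strongly_connected_bounded_comms}, Lemma~\ref{lem:stricly_pos_vec_M} forces $s(-\Gamma + \Phi) = 0$, since the strictly positive right eigenvector $\mathbf{1}$ must correspond to the simple dominant eigenvalue. Equivalently, Proposition~\ref{prop:spec_rad} gives $\rho(\Gamma^{-1}\Phi) = 1$.

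To finish, write $-A - \Gamma + \Phi = -(A + \Gamma) + \Phi$ and reapply Proposition~\ref{prop:spec_rad}: it suffices to show $\rho((A + \Gamma)^{-1}\Phi) < 1$. Since $A$ is positive diagonal, $(A + \Gamma)^{-1}\Phi \leq \Gamma^{-1}\Phi$ componentwise, with strict inequality at every nonzero entry of $\Phi$, and strict Perron--Frobenius monotonicity for irreducible non-negative matrices then yields $\rho((A + \Gamma)^{-1}\Phi) < \rho(\Gamma^{-1}\Phi) = 1$. The main obstacle is this final monotonicity step, which is not among the preliminaries; if a citation is to be avoided, a short Collatz--Wielandt argument suffices: letting $\mathbf{v} \gg \mathbf{0}$ solve $\Gamma^{-1}\Phi \mathbf{v} = \mathbf{v}$, the strict inequality $(A + \Gamma)^{-1}\Phi \mathbf{v} \ll \mathbf{v}$ combined with the min-ratio bound on the spectral radius of a non-negative matrix delivers $\rho((A+\Gamma)^{-1}\Phi) < 1$, completing the proof.
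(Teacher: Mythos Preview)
Your argument is correct, but you take a longer path than the paper to establish $s(-A-\Gamma+\Phi)<0$. The paper observes that Assumption~\ref{assume:equal_flows} gives the row-sum identity $\gamma_i=\sum_{j\neq i}\phi_{ij}$ and then applies the Gershgorin Circle Theorem directly: each disc is centered at $-\alpha_i-\gamma_i$ with radius $\gamma_i$, so every eigenvalue has real part at most $-\alpha_i<0$. That is a one-line argument and does not need irreducibility (Assumption~\ref{assume:strongly_connected_bounded_comms}). By contrast, you route the same row-sum identity through the Perron--Frobenius machinery of Lemma~\ref{lem:stricly_pos_vec_M} and Proposition~\ref{prop:spec_rad}, which forces you to assume irreducibility of $\Phi$ and to supply the extra strict-monotonicity step. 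Your Collatz--Wielandt patch is fine in substance, though note the bound you want is the \emph{max}-ratio upper bound $\rho(M)\leq\max_i (M\mathbf{v})_i/v_i$, not the min-ratio lower bound. What your approach buys is that it stays entirely within the lemmas the paper has already assembled; what Gershgorin buys is brevity and a weaker hypothesis.
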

\begin{proof}
Evaluating \eqref{eq:jacobian} at the healthy state equilibrium yields
\begin{equation}
\label{eq:jacobian_eval_healthy}
    J_{(\mathbf{1},\mathbf{0},\mathbf{0},\mathbf{0})}
    = 
    \begin{bmatrix}
     -\Sigma-\Gamma+\Phi & B & 0 \\
    \Sigma & -D-\Gamma+\Phi & 0 \\
    0 & D & -A-\Gamma+\Phi
    \end{bmatrix}.
\end{equation}
Since \eqref{eq:jacobian_eval_healthy} is block lower triangular, we can examine the eigenvalues of each block separately. By Assumption~\ref{assume:equal_flows} we have that $\gamma_i = \sum_{j \neq i} \phi_{ij}$, thus by the Gershgorin Circle Theorem we have that $s(-A-\Gamma-\Phi) < 0$. The upper block is equal to $U$, which is negative definite by assumption. 
Thus, we have that $s(J_{(\mathbf{1},\mathbf{0},\mathbf{0},\mathbf{0})}) < 0$, which satisfies Lyapunov's indirect method for determining the local stability of the healthy state.
\end{proof}

\begin{corollary}
If $s(U) > 0$, then the healthy state of \eqref{eq:matrix_eq} is unstable.
\end{corollary}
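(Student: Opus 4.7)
The plan is to mirror the proof of Proposition \ref{prop:healthy_stable} and invoke the instability form of Lyapunov's indirect method. First I would re-examine the Jacobian $J_{(\mathbf{1},\mathbf{0},\mathbf{0},\mathbf{0})}$ already computed in \eqref{eq:jacobian_eval_healthy}: it is block lower triangular with upper diagonal block exactly equal to $U$ and lower diagonal block equal to $-A-\Gamma+\Phi$. Consequently, the spectrum of $J_{(\mathbf{1},\mathbf{0},\mathbf{0},\mathbf{0})}$ is the union $\sigma(U)\cup\sigma(-A-\Gamma+\Phi)$.

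By hypothesis, $s(U)>0$, so $U$ has at least one eigenvalue with strictly positive real part; this eigenvalue is therefore also an eigenvalue of $J_{(\mathbf{1},\mathbf{0},\mathbf{0},\mathbf{0})}$. Hence $s(J_{(\mathbf{1},\mathbf{0},\mathbf{0},\mathbf{0})})>0$. Because the right-hand side of \eqref{eq:matrix_eq} is polynomial (hence $C^1$) in the state, the standard instability version of Lyapunov's indirect method applies: the presence of a Jacobian eigenvalue with positive real part at an equilibrium implies that the equilibrium is unstable. This yields the claim.

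I do not foresee a genuine obstacle here, since the block-triangular structure of the Jacobian does all of the work and the remainder of the argument is a direct citation of a classical theorem. The only small care needed is to invoke the correct direction of Lyapunov's indirect method (instability rather than exponential stability) and to note that the eigenvalues of $-A-\Gamma+\Phi$, shown in Proposition~\ref{prop:healthy_stable} to lie in the open left half-plane via Gershgorin, play no role in this direction of the argument since the unstable eigenvalue already comes from $U$.
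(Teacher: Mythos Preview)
Your proposal is correct and matches the paper's own proof essentially line for line: both exploit the block lower-triangular structure of $J_{(\mathbf{1},\mathbf{0},\mathbf{0},\mathbf{0})}$ in \eqref{eq:jacobian_eval_healthy} to conclude that $s(U)>0$ forces $s(J_{(\mathbf{1},\mathbf{0},\mathbf{0},\mathbf{0})})>0$, and then apply the instability direction of Lyapunov's indirect method. Your version is slightly more explicit (writing out the spectrum as a union and remarking that the $-A-\Gamma+\Phi$ block is irrelevant here), but there is no substantive difference in approach.
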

\begin{proof}
This result follows by the same method as the proof of Proposition~\ref{prop:healthy_stable}, which leverages \eqref{eq:jacobian} evaluated at the healthy state, shown in \eqref{eq:jacobian_eval_healthy}, which is block lower triangular. Since the upper block is equal to $U$, if $s(U) > 0$ then $s(J_{(\mathbf{1},\mathbf{0},\mathbf{0},\mathbf{0})}) > 0$. Thus, by Lyapunov's indirect method, the healthy state is unstable.
\end{proof}
\noindent We now discuss the local stability of the healthy state for \eqref{eq:exr_matrix_dynamics} with respect to changes in the rate of flow between nodes.
\begin{theorem}
\label{thm:flows_eig_Vals}
If $J_{(\mathbf{1},\mathbf{0},\mathbf{0},\mathbf{0})}$, as defined in \eqref{eq:jacobian_eval_healthy}, 
has distinct eigenvalues 
then, given Assumptions \ref{assume:equal_flows} and \ref{assume:parameters}, there exists no permissible perturbation to the population flows $\gamma_i$, for $i \in [n]$, that will change the local stability of the healthy state of \eqref{eq:matrix_eq}. 
\end{theorem}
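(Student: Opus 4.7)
My approach combines the block lower triangular structure of $J_{(\mathbf{1},\mathbf{0},\mathbf{0},\mathbf{0})}$ with the eigenvalue perturbation bound in Lemma~\ref{lem:eig_pert}, using the distinct-eigenvalue hypothesis to supply diagonalizability.

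First, I would reduce the question to the sign of $s(U)$. The Jacobian in \eqref{eq:jacobian_eval_healthy} is block lower triangular, so its spectrum is the union of the spectra of its diagonal blocks, and the bottom block $-A-\Gamma+\Phi$ is always Hurwitz. Indeed, Assumption~\ref{assume:equal_flows} yields the identity $\gamma_i=\sum_{j\neq i}\phi_{ij}$ used in the proof of Proposition~\ref{prop:healthy_stable}, and Gershgorin's disk theorem then places every eigenvalue of $-A-\Gamma+\Phi$ in the half-plane with real part at most $-\min_i \alpha_i$, irrespective of the flow values. Hence local stability (respectively instability) of the healthy state is equivalent to $s(U)<0$ (respectively $s(U)>0$), and this equivalence persists under any permissible flow perturbation.

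Next, I would parameterize such a perturbation. A change $\gamma_i\mapsto\gamma_i+\Delta\gamma_i$ induces $\Gamma\mapsto\Gamma+\Delta\Gamma$ and $\Phi\mapsto\Phi+\Delta\Phi$, so that $J_{(\mathbf{1},\mathbf{0},\mathbf{0},\mathbf{0})}$ is perturbed by the additive block-diagonal matrix $E=\text{diag}(L_\Delta,L_\Delta,L_\Delta)$ with $L_\Delta:=-\Delta\Gamma+\Delta\Phi$. Since Assumption~\ref{assume:equal_flows} forces both $-\Gamma+\Phi$ and its perturbed counterpart to have zero row sums, the difference $L_\Delta$ inherits $L_\Delta\mathbf{1}=\mathbf{0}$, and therefore $E\mathbf{1}=\mathbf{0}$. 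This is the structural fact I want to leverage.

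Third, because $J_{(\mathbf{1},\mathbf{0},\mathbf{0},\mathbf{0})}$ has distinct eigenvalues by hypothesis, it is diagonalizable as $S\Lambda S^{-1}$, and Lemma~\ref{lem:eig_pert} applies directly: for every eigenvalue $\hat{\lambda}$ of the perturbed Jacobian there exists an eigenvalue $\lambda_i$ of $J_{(\mathbf{1},\mathbf{0},\mathbf{0},\mathbf{0})}$ with $|\hat{\lambda}-\lambda_i|\leq\kappa_\infty(S)\|E\|_\infty$. Since the $\lambda_i$ are distinct and, under the premise that the healthy state is strictly stable or strictly unstable, uniformly separated from the imaginary axis by some $\varepsilon>0$, every permissible perturbation with $\kappa_\infty(S)\|E\|_\infty<\varepsilon$ preserves the sign of $\text{Re}(\lambda_i)$ for each $i$ and therefore preserves local stability of the healthy state.

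The main obstacle I anticipate is producing a bound on $\|E\|_\infty$ that works uniformly across all permissible perturbations, rather than only sufficiently small ones. The zero-row-sum identity $L_\Delta\mathbf{1}=\mathbf{0}$ is the natural handle: it forces the off-diagonal mass of each row of $L_\Delta$ to cancel its diagonal, so $\|L_\Delta\|_\infty$ admits a bound in terms of $\max_i|\Delta\gamma_i|$ once the population ratios $N_j/N_i$ appearing in $\Phi$ are accounted for. Should the intended claim be stronger than a sufficiently-small-perturbation statement, one would need to couple this bound with a continuity argument along paths in the space of permissible flows, invoking the Perron--Frobenius structure of the irreducible Metzler matrix $U$ (via Lemma~\ref{lem:stricly_pos_vec_M}) to rule out any crossing of the simple real leading eigenvalue $s(U)$ through zero. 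Turning the qualitative invariance $E\mathbf{1}=\mathbf{0}$ into a quantitative invariant along the entire perturbation path is where I expect the argument to require the most care.
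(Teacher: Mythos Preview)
Your setup mirrors the paper's proof almost exactly: the same additive decomposition $J' = J + \bar{E}$ with $\bar{E}$ block-diagonal in $L_\Delta := N^{-1}W\Theta N - \Theta$ (your $-\Delta\Gamma+\Delta\Phi$), the same appeal to diagonalizability via the distinct-eigenvalue hypothesis, and the same invocation of Lemma~\ref{lem:eig_pert}. Where you stop is precisely the step you flag as the obstacle, and that is exactly where the paper makes its move.

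The paper does not pursue a small-perturbation bound, nor a continuity or Perron--Frobenius argument along a path of admissible flows. Instead, from the permissibility constraint $\theta_i = \sum_{j\neq i}\tfrac{N_j}{N_i}w_{ij}\theta_j$ (equivalently your $L_\Delta\mathbf{1}=\mathbf{0}$) it asserts in a single line that $\|\bar{E}\|_\infty = 0$, so that every eigenvalue of the perturbed Jacobian coincides \emph{exactly} with an eigenvalue of $J_{(\mathbf{1},\mathbf{0},\mathbf{0},\mathbf{0})}$. That one sentence is the entirety of what you are missing relative to the paper. You should note, however, that zero row sums (without absolute values) do not by themselves force the maximum absolute row sum to vanish, so this step merits careful independent scrutiny; your instinct that converting $L_\Delta\mathbf{1}=\mathbf{0}$ into a genuine spectral invariant is the delicate point is well placed, and the Perron--Frobenius path-continuity idea you sketch may in fact be the more robust way to make the conclusion rigorous.
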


\begin{proof}
Let $\gamma_i'$ be some perturbation to $\gamma_i, i \in  [n]$ such that Assumption \ref{assume:equal_flows} is satisfied and $\gamma'_i=\gamma_i + \theta_i$. We can then define the perturbed system dynamics of \eqref{eq:exr_matrix_dynamics} with the matrix

\footnotesize
\begin{equation}
    \bar{H}' = 
    \begin{bmatrix}
     -\Sigma-\Gamma'+\Phi' & B(I-E-X-R) & 0 \\
    \Sigma & -D-\Gamma'+\Phi' & 0 \\
    0 & D & -A-\Gamma'+\Phi'
    \end{bmatrix},
\end{equation}
\normalsize
where $\Gamma' = \Gamma + \Theta$, $\Theta = diag(\theta_i)$, and $\Phi' = N^{-1}W(\Gamma + \Theta)N$. We can then separate the perturbed system into
\begin{equation}
\label{eq:H_prime}
    \bar{H}'= \bar{H} + \bar{E},
\end{equation}
where $\bar{H}$ is the original system dynamics defined in \eqref{eq:exr_matrix_dynamics} and
\small
\begin{equation}
    \bar{E} = 
    \begin{bmatrix}
     N^{-1}W \Theta N - \Theta & 0 & 0 \\
     0 & N^{-1}W \Theta N - \Theta & 0 \\
     0 & 0 & N^{-1} W \Theta N - \Theta
    \end{bmatrix}.
\end{equation}
\normalsize
Taking the Jacobian of \eqref{eq:H_prime} yields
\begin{equation*}
    J'_{(e^*, x^*,r^*)} = J_{(e^*, x^*,r^*)} +\bar{E}.
\end{equation*}
We consider the healthy state equilibrium, which exists for all allowed parameters of our system by Proposition~\ref{prop:healthy_eq_exs}. 
Since the eigenvalues of $J_{{(\mathbf{1},\mathbf{0},\mathbf{0},\mathbf{0})}}$ are distinct, we have that $J_{(\mathbf{1},\mathbf{0},\mathbf{0},\mathbf{0})}$ is diagonalizable by some matrix $\bar{S}$, where $J_{(\mathbf{1},\mathbf{0},\mathbf{0},\mathbf{0})} = \bar{S} \Lambda \bar{S}^{-1}$ and $\Lambda = diag(\lambda_i)$. Thus, by Lemma~\ref{lem:eig_pert} we have that if $\lambda'$ is an eigenvalue of $J'$, then there exists some eigenvalue $\lambda_i$ of $J$ such that
\begin{equation}
    |\lambda' - \lambda_i| \leq \kappa(\bar{S})_{\infty} {|| \bar{E} ||}_{\infty}.
\end{equation}
However, by Assumption \ref{assume:equal_flows} we have that $\theta_i = \sum_{j \neq i} \frac{N_j}{N_i} w_{ij} \theta_j$. Thus,  ${|| \bar{E} ||}_{\infty} = 0$ and for any eigenvalue $\lambda'$ of $J'$ there exists some eigenvalue $\lambda_i, i \in [n]$ such that $\lambda' = \lambda_i$. Therefore, no perturbation of the travel flows that maintains Assumption \ref{assume:equal_flows}, and thus keeps the model well-defined, can change the local stability of the healthy state of \eqref{eq:matrix_eq}. 
\end{proof}

\subsection{Endemic State Analysis}
We now present conditions for when an endemic state of the system, where $z^* \gg \mathbf{0}$, will exist. For clarity, we define the following matrices explicitly as a function of the full system state
\footnotesize
\begin{align}
    Q^*(z^*)&=
    \begin{bmatrix}
    BX^*(z^*)+\Gamma & 0 & 0 & 0 \\
    0 & \Sigma+\Gamma & 0 & 0 \\
    0 & 0 & D+\Gamma & 0 \\
    0 & 0 & 0 & A+\Gamma
    \end{bmatrix} \\
    M^*(z^*) &=
    \begin{bmatrix}
    \Phi & 0 & 0 & A \\
    BX^*(z^*) & \Phi & 0 & 0 \\
    0 & \Sigma & \Phi & 0 \\
    0 & 0 & D & \Phi
    \end{bmatrix},
\end{align}
\normalsize
where
$X^*(z^*) = diag \left( 
    \begin{bmatrix}
     0 & 0 & I_n & 0
    \end{bmatrix}
    z^* \right).$
\begin{theorem} \label{thm:endemic_eq}
If $\inf_{t \geq t_0}s(-Q(t) + M(t)) > 0$, then there exists an endemic equilibrium such that $z^* \gg \edit{\mathbf{0}}$. 
\end{theorem}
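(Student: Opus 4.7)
The plan is to identify an endemic equilibrium as a strictly positive vector $z^* \gg \mathbf{0}$ lying in the kernel of the state-dependent Metzler matrix $H^*(z^*) := -Q^*(z^*) + M^*(z^*)$. Setting $\dot{z} = \mathbf{0}$ in \eqref{eq:matrix_form} reduces the problem to the self-consistent eigenvector equation $H^*(z^*)z^* = \mathbf{0}$, subject to $z^*$ lying on the invariant simplex supplied by Lemma~\ref{lem:welldefined}. Because $H^*(z^*)$ is irreducible whenever the infected block of $z^*$ has any strictly positive entry (Lemma~\ref{lem:irreducible}), Lemma~\ref{lem:stricly_pos_vec_M} guarantees that its spectral abscissa is a simple eigenvalue with a unique (up to scaling) strictly positive eigenvector. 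Thus the task reduces to producing a $z^* \gg \mathbf{0}$ for which $s(H^*(z^*)) = 0$ and $z^*$ is the associated Perron vector.

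I would set up a fixed-point problem on the interior of the simplex: for each admissible $z$ whose infected block is nonzero, define $\Psi(z)$ to be the positive Perron eigenvector of $H^*(z)$, normalized so the resulting vector lies on the invariant simplex from Lemma~\ref{lem:welldefined}. Lemma~\ref{lem:irreducible} and Lemma~\ref{lem:stricly_pos_vec_M} guarantee $\Psi$ is well-defined and continuous on a compact convex subset $\mathcal{K}$ of the interior simplex bounded away from the healthy face. Brouwer's fixed-point theorem then delivers $z^* \in \mathcal{K}$ with $\Psi(z^*)=z^*$, so that $z^*$ is the positive Perron eigenvector of $H^*(z^*)$ with eigenvalue $s(H^*(z^*))$. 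To identify this eigenvalue as $0$, I would use the hypothesis $\inf_{t \geq t_0} s(-Q(t)+M(t)) > 0$ to rule out $s(H^*(z^*)) < 0$ near the healthy face, while for large infected fractions the diagonal penalty $BX^*(z)+\Gamma$ in $Q^*(z)$ dominates $M^*(z)$ and forces $s(H^*(z)) < 0$. Continuity of the spectral abscissa in the entries of $H^*(z)$ and connectedness of the admissible region then give, via an intermediate-value argument, a state where the spectral abscissa is exactly $0$; the Brouwer fixed point is precisely such a state, since fixed points of $\Psi$ correspond to solutions of $H^*(z^*) z^* = s(H^*(z^*)) z^* = \mathbf{0}$ on the sum-constrained simplex.

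The main obstacle is the careful construction of the compact convex region $\mathcal{K}$ so that $\Psi$ maps $\mathcal{K}$ into itself and stays uniformly bounded away from the face $x = \mathbf{0}$, where irreducibility of $M^*$ breaks down. The hypothesis $\inf_{t \geq t_0} s(-Q(t)+M(t)) > 0$ is exactly what prevents $\Psi$-orbits from accumulating on the healthy face: it guarantees the linearization about the healthy state repels trajectories in the direction of the positive cone, which in turn allows $\mathcal{K}$ to be chosen strictly in the interior. Strict positivity and the genuinely endemic character of $z^*$ then follow immediately from Lemma~\ref{lem:stricly_pos_vec_M} applied to the irreducible Metzler matrix $H^*(z^*)$.
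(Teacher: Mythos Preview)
Your proposal has a genuine gap at the point where the Brouwer argument and the intermediate-value argument are supposed to meet. A fixed point $z^*=\Psi(z^*)$ only tells you that $z^*$ is the Perron eigenvector of $H^*(z^*)$, i.e.\ $H^*(z^*)z^*=s(H^*(z^*))\,z^*$; it says nothing about the \emph{value} of $s(H^*(z^*))$. Your separate intermediate-value argument locates some $\tilde z$ with $s(H^*(\tilde z))=0$, but there is no mechanism linking $\tilde z$ to the Brouwer fixed point $z^*$. The sentence ``the Brouwer fixed point is precisely such a state'' is asserting exactly the missing step. A second, more structural problem is that $\Psi$ is not well-defined as stated: the Perron eigenvector is unique only up to a single scalar, but the invariant simplex of Lemma~\ref{lem:welldefined} imposes $n$ independent affine constraints $s_i+e_i+x_i+r_i=1$, so in general no scaling of the Perron vector lands on it.

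The paper avoids both issues by choosing a different self-map. Instead of the Perron-eigenvector map, it uses
\[
f(z)=Q^*(z)^{-1}M^*(z)\,z,
\]
whose fixed points are \emph{exactly} the equilibria $(-Q^*(z)+M^*(z))z=\mathbf{0}$, so no eigenvalue identification is needed after the fact. The hypothesis enters through Proposition~\ref{prop:spec_rad}: it forces $\rho\big(Q^*(z)^{-1}M^*(z)\big)>1$, and the associated Perron vector $v\gg\mathbf{0}$ from Lemma~\ref{lem:stricly_pos_vec_M} is used only to manufacture a lower barrier, namely an $\varepsilon>0$ with $\varepsilon v\le f(\varepsilon v)$ componentwise. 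This, together with an upper bound on $f$, yields a compact convex box $\{\varepsilon v\le z\le b\mathbf{1}\}$ that $f$ maps into itself, and Brouwer gives the endemic equilibrium directly. If you want to salvage your route, the cleanest fix is to replace $\Psi$ by this $f$; the ``repelling at the healthy face'' intuition you describe is precisely what the inequality $\varepsilon v\le f(\varepsilon v)$ encodes.
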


\begin{proof}
Define the continuous map $f:(0,1]^{4n} \rightarrow [0,b]^{4n}$, where $b \in \mathbb{R}_{\geq 1}$, given by
\begin{equation}
\label{eq:equilibrium_eq}
    f(z) = Q^*(z)^{-1}M^*(z) z.
\end{equation}
Since the domain of $f$ is $(0,1]^{4n}$, $z$ as an argument of $f$ satisfies $z \gg 0$. 
Further, by Lemma \ref{lem:irreducible}, since $Q^*(z)^{-1}M^*(z)$ is an irreducible, non-negative matrix for all $z \gg \edit{\mathbf{0}}$, by Lemma \ref{lem:stricly_pos_vec_M}, there exists a $v \gg \edit{\mathbf{0}}$ such that
\begin{equation}
    Q^*(z)^{-1}M^*(z)v= cv,
\end{equation}
where $c = \rho\big(Q^*(z)^{-1}M^*(z)\big)$. Since $\inf_{t \geq t_0}s(-Q(t) + M(t)) > 0$ we have, by Proposition \ref{prop:spec_rad}, that $c > 1$. We can then find an $\varepsilon > 0$ such that for each $i \in [4n]$
\begin{align}
\label{eq:eps_v_leq_r}
    \varepsilon v_i \leq \frac{c-1}{c}
\end{align}
and
\begin{equation} \label{eq:esp_vi_leq_zi}
    \varepsilon v_i \leq z_i.
\end{equation}
From \eqref{eq:eps_v_leq_r}, it follows that $1\leq \frac{c}{1+\varepsilon c v_i}$ and thus $\varepsilon v_i \leq \frac{\varepsilon c v_i}{1+\varepsilon c v_i}$, yielding 
\begin{align}
\label{eq:eps_v_leq_spec_rad}
    \varepsilon v_i &\leq \frac{\big(Q^*(z)^{-1}M^*(z)\varepsilon v \big)_i}{1+\big(Q^*(z)^{-1}M^*(z)\varepsilon v \big)_i}.
\end{align}
We can show that the right hand side of \eqref{eq:eps_v_leq_spec_rad} is less than or equal to $f(\varepsilon v)_i=\big(Q^*(\varepsilon v)^{-1}M^*(\varepsilon v)\varepsilon v \big)_i$ for each $i \in [4n]$ as follows. We can expand \eqref{eq:equilibrium_eq} using \eqref{eq:full_state_vec} as
\begin{subnumcases}{f(z)= \label{eq:subeq}}
   (diag(B x^z)+\Gamma)^{-1}(Ar^z + \Phi s^z) \label{eq:subeq_s} \\ 
   (\Sigma+\Gamma)^{-1}(diag(Bx^z)s^z + \Phi e^z) \label{eq:subeq_e} \\
   (D+\Gamma)^{-1}(\Sigma e^z + \Phi x^z) \label{eq:subeq_x} \\
   (A + \Gamma)^{-1}(D x^z + \Phi r^z) \label{eq:subeq_r}
\end{subnumcases}
where we can write $z = \begin{bmatrix} {s^z}^\top & {e^z}^\top & {x^z}^\top & {r^z}^\top \end{bmatrix}^\top$. Note that for \eqref{eq:subeq_e}-\eqref{eq:subeq_r}, if we increase any value $z_i$ the output also increases, i.e., \eqref{eq:subeq_e}-\eqref{eq:subeq_r} are monotonic. Thus, since $\varepsilon v_i \leq z_i$, for any $f(\varepsilon v)_i$ in \eqref{eq:subeq_e}-\eqref{eq:subeq_r}, 
we have $\varepsilon v_i \leq f(\varepsilon v)_i$. 

We now consider \eqref{eq:subeq_s}, which is not monotonic, and show that $\varepsilon v_i \leq f(\varepsilon v)_i$,
for the last case \eqref{eq:subeq_s}.
To show this is true, we write the vector $v$ as
\begin{equation*}
    v = \begin{bmatrix} {v^s}^\top & {v^e}^\top & {v^x}^\top & {v^r}^\top \end{bmatrix}^\top
\end{equation*}
and note that by \eqref{eq:esp_vi_leq_zi} we have
\begin{align}
    \beta_i \varepsilon v_i^x + \gamma_i &\leq  \beta_i x_i+\gamma_i,
\end{align}
for every $i \in [n]$. To simplify notation, let 
\begin{equation}
\label{eq:simplified_variable}
    a_i = (\alpha_i v^r_i + (\Phi v^s)_i).
\end{equation}
We then have that
\begin{align}
    \beta_i \varepsilon v_i^x + \gamma_i &\leq  (\beta_i x_i+\gamma_i)(1+ (\beta_i x_i+\gamma_i)^{-1}\varepsilon a_i)
\end{align}
since $(\beta_i x_i+\gamma_i)^{-1}\varepsilon a_i \geq 0$, which then yields
\begin{align}
\label{eq:second_last_ineq}
    \frac{(\beta_i x_i+\gamma_i)^{-1} }{1+ (\beta_i x_i+\gamma_i)^{-1}\varepsilon a_i} &\leq (\beta_i \varepsilon v^x_i +\gamma_i)^{-1}.
\end{align}
Finally, multiplying both sides of \eqref{eq:second_last_ineq} by $\varepsilon a_i$ we have
\begin{align}
\label{eq:final_ineq}
    \frac{(\beta_i x_i+\gamma_i)^{-1} \varepsilon a_i}{1+ (\beta_i x_i+\gamma_i)^{-1}\varepsilon a_i} &\leq (\beta_i \varepsilon v^x_i +\gamma_i)^{-1}\varepsilon a_i.
\end{align}
Therefore, since by \eqref{eq:eps_v_leq_spec_rad}  we have
\begin{align*}
    \varepsilon v_i^s \leq \frac{(\beta_i x_i+\gamma_i)^{-1} \varepsilon (\alpha_i v^r_i + (\Phi v^s)_i)}{1+ (\beta_i x_i+\gamma_i)^{-1}\varepsilon (\alpha_i v^r_i + (\Phi v^s)_i)}
\end{align*}
and by \eqref{eq:subeq_s}
\begin{align*}
    f(\varepsilon v^s)_i = (\beta_i \varepsilon v^x_i +\gamma_i)^{-1}\varepsilon (\alpha_i v^r_i + (\Phi v^s)_i),  
\end{align*}
we have that $\varepsilon v_i^s \leq f(\varepsilon v^s)_i$, for all $i \in [n]$, by \eqref{eq:final_ineq}. Thus, we have shown that $\varepsilon v_i \leq f(\varepsilon v)_i$, for all $i \in [4n]$. 

We now show that $f(z)$ is bounded by a $b \in \R$. Since $\edit{\mathbf{0}} \ll z \leq \edit{\mathbf{1}}$ we have by Assumption \ref{assume:parameters} and \eqref{eq:subeq} that $f(z)$ must also be finite and we can choose $b = \sup_{i \in [4n]} f(z)_i < \infty$. Thus, $f$ maps the convex compact set $\mathcal{C} = \{ z \,\,|\,\, \varepsilon v \leq z \leq  \mathbf{1} b\}$ to itself. By Brouwer’s fixed-point theorem, $f$ has a fixed point in $\mathcal{C}$, which must be strictly positive. Let $z^*$ be this fixed point, then $f(z^*) = z^*$, i.e.
\begin{equation}
     z^* = Q^*(z^*)^{-1}M^*(z^*)z^*.
\end{equation}
Therefore, we have
\begin{align*}
    Q^*(z^*)z^* &= M^*(z^*)z^* \\
    \edit{\mathbf{0}} & = (M^*(z^*) - Q^*(z^*))z^*
\end{align*}
and thus $z^* \gg \edit{\mathbf{0}}$ is an equilibrium point of \eqref{eq:flows_ind_node_cont}. Further, by Lemma \ref{lem:welldefined} we know that $z^* \in (0,1]^{4n}$.
\end{proof}
\noindent 
Note that since our method of proving existence of an endemic equilibrium for \eqref{eq:flows_ind_node_cont} requires irreducibility we must include the susceptible state in our analysis, causing the sufficient condition needed to prove existence to be stronger than one that is time invariant. We now give conditions under which the endemic equilibrium
is unique.
\begin{theorem} \label{thm:endemic_unique}
Let there exist an endemic equilibrium for the system in \eqref{eq:flows_ind_node_cont}. If $\beta_i \geq \gamma_i$, for all $i \in [n]$, then the endemic equilibrium is unique.
\end{theorem}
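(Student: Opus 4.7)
The plan is to derive a contradiction from the assumption of two distinct strictly positive endemic equilibria $z^{*}$ and $\hat z^{*}$: first reduce uniqueness of the full state to uniqueness of the infection vector $x^{*}$ via the block structure of \eqref{eq:subeq}, and then close the argument with a maximum-ratio (Perron--Frobenius style) comparison in which the hypothesis $\beta_i \geq \gamma_i$ plays the decisive role.

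Because the row sums of $\Gamma-\Phi$ vanish by Assumption~\ref{assume:equal_flows}, both $A+\Gamma-\Phi$ and $D+\Gamma-\Phi$ are strictly row-diagonally-dominant Metzler matrices and hence nonsingular $M$-matrices by Lemma~\ref{lem:diag_dominant}. The $r$- and $x$-blocks of \eqref{eq:subeq} then give the parameter-only relations $r^{*} = L x^{*}$ with $L := (A+\Gamma-\Phi)^{-1}D \geq 0$ and $e^{*} = J x^{*}$ with $J := \Sigma^{-1}(D+\Gamma-\Phi)$; combining with the per-node conservation from Lemma~\ref{lem:welldefined} yields $s^{*} = \mathbf{1} - (I + J + L)x^{*}$. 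The same identities apply to $\hat z^{*}$, so the problem reduces to showing $x^{*} = \hat x^{*}$. Substituting these linear relations into the only genuinely nonlinear equation \eqref{eq:subeq_s} leaves a reduced fixed-point condition entirely in $x^{*}$.

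Suppose for contradiction that $x^{*} \neq \hat x^{*}$, set $\mu := \max_i x_i^{*}/\hat x_i^{*}$, attained at some index $i^{*}$, and take $\mu \geq 1$ by symmetry (swap roles if not). Evaluating the reduced equation at $i^{*}$ with $x_{i^{*}}^{*} = \mu \hat x_{i^{*}}^{*}$ and $x_j^{*} \leq \mu \hat x_j^{*}$ for every $j$, while using the nonnegativity of $L$ and $\Phi$, should yield an inequality that pits $\beta_{i^{*}} x_{i^{*}}^{*} s_{i^{*}}^{*}$ against $\mu \beta_{i^{*}} \hat x_{i^{*}}^{*} \hat s_{i^{*}}^{*}$ and in which the coefficient $\beta_i x_i^{*} + \gamma_i$ of $s_i^{*}$ appears. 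The hypothesis $\beta_{i^{*}} \geq \gamma_{i^{*}}$ is precisely what forces the infection term to dominate the flow term inside this coefficient, driving the induced bound to $\mu \leq 1$ and contradicting $\mu > 1$; the residual case $\mu = 1$ is then handled by propagating the equality at $i^{*}$ through the network using the irreducibility of $M(z^{*})$ from Lemma~\ref{lem:irreducible} (supplied by Assumption~\ref{assume:strongly_connected_bounded_comms}), giving $x^{*} = \hat x^{*}$ and hence $z^{*} = \hat z^{*}$.

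The main obstacle I anticipate is the bookkeeping in the comparison step, since $J$ is only a $Z$-matrix and not necessarily nonnegative, so the componentwise bound $e^{*} \leq \mu \hat e^{*}$ does not follow from $x^{*} \leq \mu \hat x^{*}$ through $J$. To sidestep this I would work directly with the $x$-block \eqref{eq:subeq_x} (whose off-diagonals lie in the nonnegative matrix $\Phi$) rather than through the reduced $J$-representation, and then route the decisive inequality through the nonlinear $s$-block \eqref{eq:subeq_s} where the product $\beta_i x_i^{*} s_i^{*}$ lives. Tracing through the algebra should reveal $\beta_i \geq \gamma_i$ as the sharp sufficient condition, consistent with the intuition that local infection must at least match the flow-driven replacement rate of susceptibles to pin the endemic state uniquely.
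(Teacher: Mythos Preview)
Your reduction to $x^{*}$ is sound, but the comparison step has a genuine gap that goes beyond bookkeeping. You claim that $\beta_{i^{*}}\geq\gamma_{i^{*}}$ is what ``forces the infection term to dominate the flow term inside'' the coefficient $\beta_{i^{*}}x_{i^{*}}^{*}+\gamma_{i^{*}}$; but that dominance would require $\beta_{i^{*}}x_{i^{*}}^{*}\geq\gamma_{i^{*}}$, which does \emph{not} follow from $\beta_{i^{*}}\geq\gamma_{i^{*}}$ since $x_{i^{*}}^{*}<1$. So the mechanism you sketch for where the hypothesis enters is incorrect, and you never actually produce an inequality at $i^{*}$ that drives $\mu\leq1$. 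The obstacle you flag---that $J=\Sigma^{-1}(D+\Gamma-\Phi)$ has negative off-diagonals, so $x^{*}\leq\mu\hat x^{*}$ does not give $e^{*}\leq\mu\hat e^{*}$---is real, and the proposed workaround of routing through the $s$-block \eqref{eq:flows_ind_node_cont_s} does not close either: that block carries $\sum_{j}\phi_{i^{*}j}s_j^{*}$, and $s^{*}$ moves in the opposite direction to $(e^{*},x^{*},r^{*})$, so a max-ratio on $x$ alone does not control $s_{i^{*}}^{*}$ against $\hat s_{i^{*}}^{*}$ in the needed direction. A Perron--Frobenius comparison might be made to work here, but it would require the ratio taken jointly over $(e,x,r)$ with a case analysis on where the maximum is attained, and the role of $\beta_i\geq\gamma_i$ would have to be located afresh---none of which is in the proposal.

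The paper's route is entirely different and does not use a ratio argument. It writes two endemic equilibria as $z=y+\varepsilon$, sums the $e$-, $x$-, $r$-equilibrium equations at each, subtracts, and casts the resulting relation in $(\varepsilon_s,\varepsilon_x,\varepsilon_r)$ as a block system whose coefficient matrix is argued to be full rank via strict diagonal dominance (Lemma~\ref{lem:diag_dominant}) of the block $BX^{y}+\mathrm{diag}(\varepsilon_{x})+\Gamma-\Phi$. The hypothesis $\beta_i\geq\gamma_i$ enters not through any comparison at an extremal index but through a rescaling trick: uniqueness is first established under the stronger assumption $\beta_i\geq1$ for all $i$ (which, combined with $|\varepsilon_{x_i}|<x_i^{y}$ and $\gamma_i=\sum_{j}\phi_{ij}$, yields the dominance inequality), and then one observes that any system with $\beta_i\geq\gamma_i$ can be scaled by a constant $\eta$ so that $\eta\beta_i\geq1$ and $\eta\gamma_i\leq1$ without moving the equilibrium set.
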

\begin{proof}
Let $z$ and $y$ be two nonzero equilibria of \eqref{eq:flows_ind_node_cont} which can be divided into their respective epidemic states as $z = \begin{bmatrix} {s^z}^\top & {e^z}^\top & {x^z}^\top & {r^z}^\top \end{bmatrix}^\top$ and $y = \begin{bmatrix} {s^y}^\top & {e^y}^\top & {x^y}^\top & {r^y}^\top \end{bmatrix}^\top$. We can express $z$ in terms of $y$ for $i \in [n]$ as
\begin{equation}
\label{eq:pert_def}
\begin{aligned}
    (s_i^y+ \varepsilon_{s_i}) + (e_i^y+ \varepsilon_{e_i}) + (x_i^y+ \varepsilon_{x_i}) + (r_i^y+ \varepsilon_{r_i})  \\= s_i^z + e_i^z + x_i^z + r_i^z = 1,
\end{aligned}    
\end{equation}
where
\begin{align} \label{eq:sum_pert_0}
    \varepsilon_{s_i} + \varepsilon_{e_i} + \varepsilon_{x_i} + \varepsilon_{r_i} = 0,
\end{align}
for each $i \in [n]$. Since both $z \gg 0$ and $y \gg 0$ are fixed points of \eqref{eq:flows_ind_node_cont}, we can sum \eqref{eq:flows_ind_node_cont_e}, \eqref{eq:flows_ind_node_cont_x}, and \eqref{eq:flows_ind_node_cont_r} to obtain
\begin{align*}
    \dot{e}_i^z+\dot{x}_i^z+\dot{r}_i^z &=
    \beta_i x_i^z \edit{s_i^z} -\gamma_i(e_i^z+x_i^z+r_i^z)  
    - \alpha_i r_i^z + \sum_{j \neq i} \phi_{ij}(e_j^z+x_j^z+r_j^z) \\
    &= \beta_i (x_i^y + \varepsilon_{x_i}) (s_i^y+ \varepsilon_{s_i}) - \alpha_i (r_i^y+ \varepsilon_{r_i}) -\gamma_i(e_i^y+x_i^y+r_i^y+\varepsilon_{e_i}+\varepsilon_{x_i}+\varepsilon_{r_i}) \\ 
    & \,\,\,\,\,\, + \sum_{j \neq i} \phi_{ij}(e_j^y+x_j^y+r_j^y+\varepsilon_{e_j}+\varepsilon_{x_j}+\varepsilon_{r_j}) \\
    &= \beta_i x_i^y s_i^y -\gamma_i(e_i^y+x_i^y+r_i^y)
    - \alpha_i r_i^y + \sum_{j \neq i} \phi_{ij}(e_j^y+x_j^y+r_j^y) \\
    &= \dot{e}_i^y+\dot{x}_i^y+\dot{r}_i^y = 0,
\end{align*}
where $\phi_{ij} = \frac{N_j}{N_i} w_{ij}\gamma_j$. Thus, by taking the difference of $(\dot{e}_i^z+\dot{x}_i^z+\dot{r}_i^z) - (\dot{e}_i^y+\dot{x}_i^y+\dot{r}_i^y)$ we get
\begin{equation} \label{eq:esp_condit_eq0}
\begin{aligned}
    \beta_i \big(x_i^y \varepsilon_{s_i} + s_i^y \varepsilon_{x_i} + \varepsilon_{x_i}\varepsilon_{s_i}  \big)  -\gamma_i(\varepsilon_{e_i}+\varepsilon_{x_i}+\varepsilon_{r_i})
    - \alpha_i \varepsilon_{r_i}+ \sum_{j \neq i} \phi_{ij}(\varepsilon_{e_j}+\varepsilon_{x_j}+\varepsilon_{r_j}) &=0.
\end{aligned}
\end{equation}
We can express \eqref{eq:esp_condit_eq0} for all $i\in [n]$ using matrix notation as follows. Define $\varepsilon_q = [\varepsilon_{q_1}, \dots, \varepsilon_{q_n}]^\top$, where $q \in \{s,e,x,r\}$. Then, using \eqref{eq:sum_pert_0} and \eqref{eq:esp_condit_eq0} we have
\begin{equation} \label{eq:vec_esp_pert}
    B(X^y \varepsilon_s+ S^y \varepsilon_x + E_x \varepsilon_s) + \Gamma \varepsilon_s - A \varepsilon_r - \Phi \varepsilon_s = \mathbf{0},
\end{equation}
\noindent where $X^y = diag(x_i^y)$, $S^y = diag(s_i^y)$, and  $E_x = diag(\varepsilon_{x_i})$. Further, we can put \eqref{eq:vec_esp_pert} into a block matrix form as
\begin{align}
\label{eq:mat_esp_pert}
\underbrace{
 \begin{bmatrix}
 BX^y + E_x + \Gamma - \Phi & 0 & 0 \\
 0 & BS^y & 0 \\
 0 & 0 & -A
 \end{bmatrix}
 }_{K}
 \begin{bmatrix}
  \varepsilon_s \\ \varepsilon_x \\ \varepsilon_r
 \end{bmatrix}
 = \mathbf{0}.
 \end{align}
 Thus, the solution to the vector of perturbations to $s^y, x^y, r^y$ is the nullspace of $K$. Note that if $K$ is full rank, then the only solution for the perturbations must be $\varepsilon_{s}=\varepsilon_{x}=\varepsilon_{r} = \mathbf{0}$.
Considering the rank of $K$, it is 
clear that the second and third block diagonals are full rank since $y \gg 0$ and by Assumption~\ref{assume:parameters}, respectively. Further, by Lemma \ref{lem:diag_dominant}, we have that if $BX^y + E_x + \Gamma - \Phi$ is diagonally dominant then it is also full rank, or in other words, if
 \begin{equation}
 \label{eq:diag_dom}
     |\beta_i x_i^y + \varepsilon_{x_i} +\gamma_i| > \sum_{j \neq i} |\phi_{ij}|, \,\,\, \forall i \in [n],
 \end{equation}
 then $\det(BX^y + E_x + \Gamma - \Phi) \neq 0$. 
 
We will now show that \eqref{eq:diag_dom} holds for any choice of $\varepsilon_{x_i}$ if $\beta_i \geq 1$ for all $i \in [n]$. Note that when $\varepsilon_{x_i} \geq 0$ we have that \eqref{eq:diag_dom} is satisfied, since $\gamma_i = \sum_{j \neq i} \phi_{ij}$ for all $i \in [n]$ by Assumption~\ref{assume:equal_flows}, and $\beta_i x^y_i > 0$ by Assumption~\ref{assume:parameters}. We now consider when $\varepsilon_{x_i} < 0$. Since $x_i^y + \varepsilon_{x_i} = x_i^z > 0$, we have that $x_i^y > -\varepsilon_{x_i}$. Therefore, if $\varepsilon_{x_i} < 0$, then we have that $x_i^y > |\varepsilon_{x_i}|$. Thus, if $\beta_i \geq 1$ for all $i \in [n]$, then \eqref{eq:diag_dom} must hold for any feasible choice of $\varepsilon_{x_i}$. Therefore, by \eqref{eq:mat_esp_pert} we have $\varepsilon_{s}=\varepsilon_{x}=\varepsilon_{r} = \mathbf{0}$, which implies, by \eqref{eq:sum_pert_0}, that $\varepsilon_{e} = \mathbf{0}$. Thus, by \eqref{eq:pert_def}, $z=y$.
 
 Now consider a system with an endemic equilibrium where $\exists i \in [n]$ such that $\beta_i < 1$. If $\beta_i \geq \gamma_i$ for all $i \in [n]$, there exists some constant scalar $\eta \in \mathbb{R}$ such that by \eqref{eq:matrix_eq},
 \begin{align*}
     \eta \left(A r^* -(B X^*+\Gamma) s^* +N^{-1}W \Gamma N s^* \right) = \edit{\mathbf{0}}  \\
     \eta \left(B X^* s^* - (\Sigma+\Gamma)e^* + N^{-1}W \Gamma N e^* \right) = \edit{\mathbf{0}},
\end{align*}
where $\eta \beta_i \geq 1$ and $\eta \gamma_i \leq 1$ for all $i \in [n]$. Thus, any endemic system with parameters where $\exists i \in [n]$ such that $\beta_i < 1$ can be mapped to one with the same endemic equilibrium where $\beta_i \geq 1$ for all $i \in [n]$.
\end{proof}
\begin{remark}
It should be noted that the condition $\beta_i \geq \gamma_i, \forall i \in [n],$ for uniqueness of the endemic equilibrium is only sufficient and simulations show that an endemic equilibrium can still exist even when $\exists i \in [n]$ such that $\beta_i < \gamma_i$.
\end{remark}
The analysis throughout this section establishes several key points. First, we find that short of a complete lockdown of infected nodes before spreading begins, manipulating travel flows alone cannot change the long-term stability of the healthy state of this system. Further, we have established conditions under which the healthy state is exponentially stable, the healthy state is unstable, an endemic equilibrium
exists, and the endemic equilibrium is unique.

\section{Applications of Travel Flows Model}
\label{sec:applications}
 In this section, we shift our focus to constructing novel tools that will enable us to leverage real-world data in Section~\ref{sec:case_study}. The proposed tools are 1) learning the infection spread parameters given infection states and travel flows data and 2) using travel flows data to predict the arrival time of a disease to every node in the network given a disease origin. In this section, we illustrate these tools using simulated data. \edit{We share the simulation code used for this section on GitHub, which can be found in \cite{butlercode2023}.}

\subsection{Parameter Identification} \label{sec:paramID}
In this section we present a method for estimating the infection parameters $\beta_i , \sigma_i , \delta_i , \alpha_i$ for each node $i$, respectively, given measurement data. Since infection data is generally collected at discrete time intervals, we first discretize the system in \edit{\eqref{eq:flows_ind_node_cont}} using Euler's method which yields

\begin{subequations}
\small
\label{eq:seirs_flows_desc}
\begin{align}
    s_i^{k+1} &= s_i^k + h\left(\alpha_i r_i^k -(\beta_i x_i^k+ \gamma_i^k) s_i^k +\sum_{j \neq i} \frac{N_j}{N_i} w^k_{ij}\gamma^k_j s^k_j \right)\\
    e_i^{k+1} &=e_i^k+ h\left(\beta_i x_i^k s_i^k - (\sigma_i + \gamma_i^k )e_i^k+ \sum_{j \neq i} \frac{N_j}{N_i} w^k_{ij}\gamma^k_j e^k_j\right) \\
    x_i^{k+1} &= x_i^k+h\left(\sigma_i e_i^k - (\delta_i+\gamma_i^k) x_i^k+ \sum_{j \neq i} \frac{N_j}{N_i} w^k_{ij}\gamma^k_j x^k_j\right) \\        
    r_i^{k+1} &= r_i^k+h\left(\delta_i x_i^k- (\alpha_i + \gamma_i^k) r_i^k + \sum_{j \neq i} \frac{N_j}{N_i} w^k_{ij}\gamma^k_j r^k_j\right), 
\end{align}
\end{subequations}
where $h>0$ is a sampling parameter and $k \in \mathbb{Z}_{\geq 0}$ is a given time index. \edit{For this application, we assume a sampling parameter $h$ that is small enough such that the system remains well-defined given the relative values of the model parameters. For a more detailed discussion and analysis of exactly when this condition holds, see our previous work in \cite{butler2021effect}.} Given $T$ samples of infection state data, we can estimate the parameters for node $i$ as follows. Let
\begin{align}
\footnotesize
    \Delta q_i =
    \begin{bmatrix}
     q^1_i - q^0_i +h(\gamma^0_i q^0_i - \sum_{j \neq i} \frac{N_j}{N_i} w^0_{ij}\gamma^0_j q^0_j ) \\ \vdots \\ 
     q^{T}_i - q^{T-1}_i +h(\gamma^{T-1}_i q^{T-1}_i - \sum_{j \neq i} \frac{N_j}{N_i} w^{T-1}_{ij}\gamma^{T-1}_j q^{T-1}_j )
    \end{bmatrix}
\end{align}
where $q_i \in \{s_i,e_i,x_i,r_i \}$, and define
\begin{align}
\footnotesize
\label{eq:ID_matrix}
    \Psi_i = h
    \begin{bmatrix}
     -s_i^0 x_i^0 & 0 & 0 & r_i^0 \\
     \vdots & \vdots & \vdots & \vdots \\
     -s_i^{T-1} x_i^{T-1} & 0 & 0 & r_i^{T-1} \\
     s_i^0 x_i^0 & -e_i^0 & 0 & 0 \\
     \vdots & \vdots & \vdots & \vdots \\
     s_i^{T-1} x_i^{T-1} & -e_i^{T-1} & 0 & 0 \\
     0 & e_i^0 & -x_i^0 & 0 \\
     \vdots & \vdots & \vdots & \vdots \\
     0 & e_i^{T-1} & -x_i^{T-1} & 0 \\
     0 & 0 & x_i^0 & -r_i^0 \\
     \vdots & \vdots & \vdots & \vdots \\
     0 & 0 & x_i^{T-1} & -r_i^{T-1} \\
    \end{bmatrix},
\end{align}
then, we rewrite the system in \eqref{eq:seirs_flows_desc} for a node $i$ given $T$ data samples of data in terms of the spread parameters as
\begin{align}
    \begin{bmatrix}
     \Delta s_i \\ \Delta e_i \\ \Delta x_i \\ \Delta r_i 
    \end{bmatrix}
    =
    \Psi_i
    \begin{bmatrix}
     \beta_i \\ \sigma_i \\ \delta_i \\ \alpha_i
    \end{bmatrix}.
\end{align}
Thus, we can solve for the parameters $\beta_i , \sigma_i , \delta_i , \alpha_i$ as 
\begin{align}
\label{eq:param_sol}
    \begin{bmatrix}
     \beta_i \\ \sigma_i \\ \delta_i \\ \alpha_i
    \end{bmatrix}
    =
    \Psi_i^{\dagger}
    \begin{bmatrix}
     \Delta s_i \\ \Delta e_i \\ \Delta x_i \\ \Delta r_i 
    \end{bmatrix},
\end{align}
where $\Psi_i^{\dagger}$ is the pseudo-inverse of $\Psi_i$. Thus, we have that a solution to the spread parameters using \eqref{eq:param_sol} is unique if and only if \eqref{eq:ID_matrix} is full column rank. 

Using this method of parameter estimation, we can utilize real infection data along with real travel flows data to determine the spreading rates for a given disease outbreak, assuming that the model defined in Section \ref{sec:model} approximates the behavior of the outbreak. We show an example of parameter identification in Figure~\ref{fig:param_fit_sim}, where data from a \edit{5-node} simulated system \edit{with initial conditions $s^0 = [0.549,0.715,0.603,0.545,0.424]^\top$, $e^0 = \mathbf{1}-s^0$, and $x^0 = r^0 = \mathbf{0}$} is perturbed with Gaussian noise \edit{with zero mean and standard deviation 0.01} and then used to learn spread the infection spread parameters as outlined in this section\edit{, where the travel weight matrix used is given in Table~\ref{tab:trav_weights} and the spread parameters and RMSE of the learned parameters are given in Table~\ref{tab:trav_parameters_RMSE}.}

\begin{table}[]
\centering
\begin{tabular}{c|ccccc}
$w_{ij}$ & 1     & 2     & 3     & 4     & 5     \\ \hline
1    & 0     & 0.212 & 0.275 & 0.25  & 0.212 \\
2    & 0.249 & 0     & 0.26  & 0.299 & 0.338 \\
3    & 0.246 & 0.198 & 0     & 0.204 & 0.178 \\
4    & 0.285 & 0.29  & 0.259 & 0     & 0.272 \\
5    & 0.22  & 0.299 & 0.206 & 0.247 & 0    
\end{tabular}
\caption{\edit{Travel connection matrix used in the simulated system shown in Figure~\ref{fig:param_fit_sim}.}}
\label{tab:trav_weights}
\end{table}

\begin{table}[]
\centering
\begin{tabular}{c|ccccc|c|}
Node  & 1     & 2     & 3     & 4     & 5     & RMSE  \\ \hline
$\beta_i$  & 0.065 & 0.044 & 0.089 & 0.096 & 0.038 & 0.03  \\
$\sigma_i$ & 0.079 & 0.053 & 0.057 & 0.093 & 0.007 & 0.01  \\
$\delta_i$ & 0.001 & 0.001 & 0.008 & 0.008 & 0.009 & 0.002 \\
$\alpha_i$ & 0.01  & 0.008 & 0.005 & 0.008 & 0.001 & 0.012 \\
$\gamma_i$ & 0.002 & 0.002 & 0.002 & 0.002 & 0.005 & - 
\end{tabular}
\caption{\edit{Parameters used in the simulated system shown in Figure~\ref{fig:param_fit_sim} along with the RMSE of learned parameters.}}
\label{tab:trav_parameters_RMSE}
\end{table}

\begin{figure}
    \centering
    \includegraphics[width=.7\columnwidth]{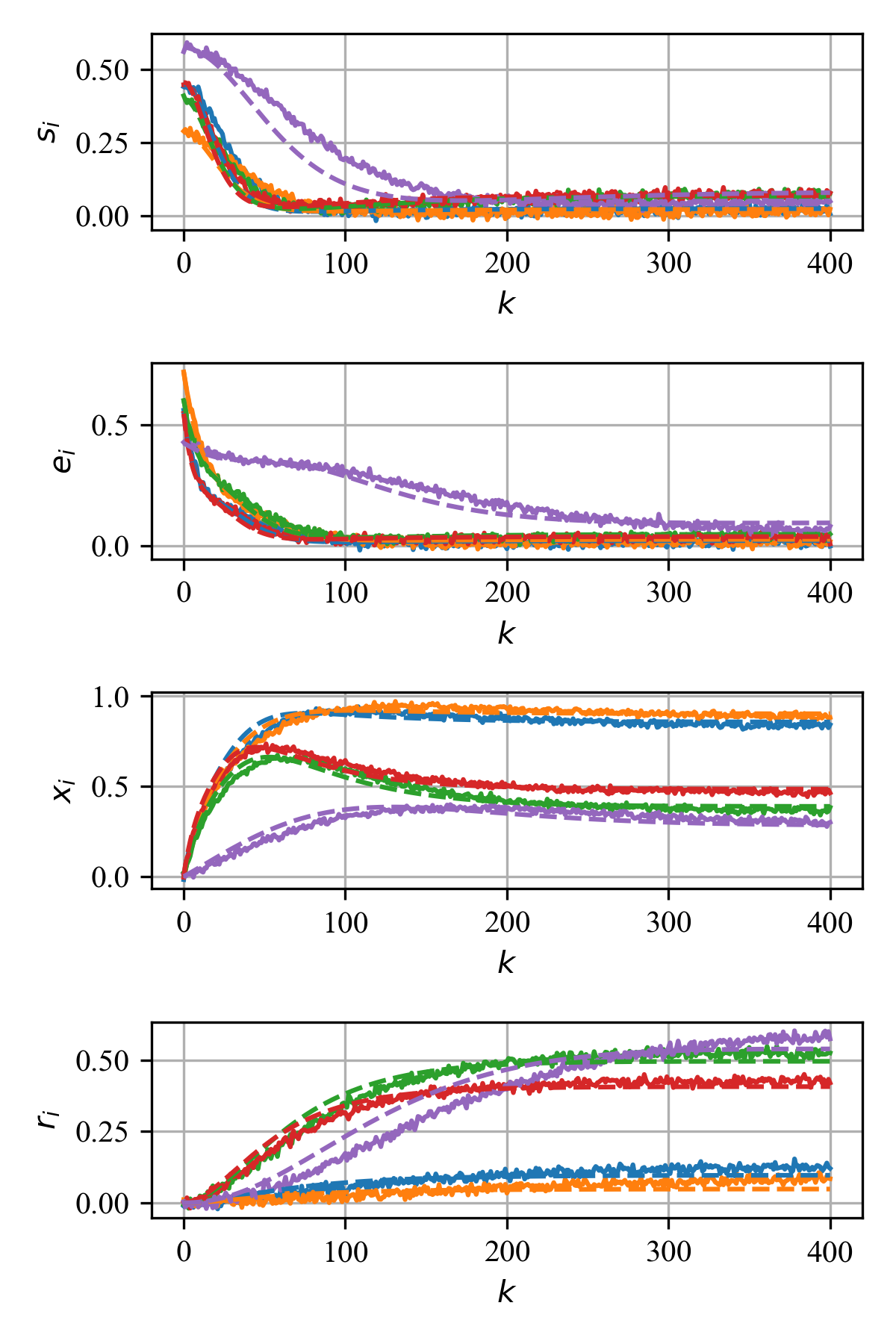}
    \caption{Simulated \edit{5-node} system from \eqref{eq:seirs_flows_desc} \edit{with parameters and travel matrix defined in Tables~\ref{tab:trav_parameters_RMSE} and \ref{tab:trav_weights}} where $n=5$, $h=1$, and added mean zero Gaussian noise with standard deviation \edit{$0.01$} and spread parameters chosen to induce an endemic state. \edit{Each solid line represents the perturb simulated data for each node.} Parameters from the noisy system are identified and then re-simulated from the same initial condition, \edit{where the predicted state trajectories for each node are shown by dashed lines of the same color.}}
    \label{fig:param_fit_sim}
\end{figure}

\subsection{Effective Distance and Travel Flows} \label{sec:eff_dist_flows}
Given our notion of population flow and the definition of $w_{ij}$ being the proportion of traveling members of the population at node $j$ flowing to node $i$, we can use this proportional flow information to compute the effective distance between any two nodes based on the most probable path an individual would take between nodes. Further, we can then use the effective distance between nodes to make predictions of the arrival time of an outbreak to a given node based on the distance of that node from other infected nodes. A similar notion of effective distance based on travel and using it to predict disease arrival time is introduced and used in \cite{brockmann2013hidden}, where the effective distance between countries is computed using flight traffic data. In our case, we apply this measure to inter-county travel data in Minnesota during a period of the COVID-19 pandemic, which will be shown in Section~\ref{sec:case_study}. 

We compute the effective distance between nodes as follows. We interpret the mobility matrix defined by the $w_{ij}$ as the transition matrix of a discrete time homogeneous stochastic jump process, where at each time step a randomly moving particle moves from node $j$ to node $i$ with probability $w_{ij}$. Let $n_k$ be the location of this particle at time $k$. Now, consider the path of a randomly moving particle after $L$ steps, forming the path $\Xi = \{n_0, n_1, \dots, n_L \}$. If we fix any two endpoints in the graph where $\Xi_{ij}$ is any path with $n_0=j$ and $n_L=i$, we can associate any path taken between these points with a probability
\begin{align}
    F(\Xi_{ij}) = \edit{w_{i n_{L-1}} \times \dots \times w_{n_1 j}} = \prod_{k=1}^{L} w_{n_k n_{k-1}}.
\end{align}
Since the proposed notion of distance is additive, we establish a connection between the probability of a given path and the effective distance of a given path by employing the multiplicative property of logarithms
\begin{equation}
    D_{F(\Xi_{ij})} = - \log F(\Xi_{ij}) = -\sum_{k=1}^{L}\log w_{n_k n_{k-1}}, 
\end{equation}
\noindent where $D_{F(\Xi_{ij})}$ is the log sum of the probabilities of each step taken on the given path.
Thus, since $w_{ij} \in [0,1]$ a low probability path will equate to a high effective distance, with $D_{F(\Xi)} \rightarrow \infty$ as $F(\Xi) \rightarrow 0$, and high probability paths result in lower effective distance. We construct a new graph with edge weights between nodes being the one-step negative log probabilities of traveling between nodes as
\begin{equation}
\label{eq:dist_graph}
    d_{ij} =
    \begin{cases}
    0 & i=j \\
    - \log w_{ij} & w_{ij} > 0 \\
    \infty & w_{ij} = 0.
    \end{cases}
\end{equation}
Using this graph with edge weights between nodes $i$ and $j$ defined by \eqref{eq:dist_graph}, we compute the effective distance as 
\begin{equation}
\label{eq:eff_dist_origin}
    D_{ij} = \min_{\Xi_{ij}} D_{F(\Xi_{ij})} \geq 0,
\end{equation}
where $D_{ij}$ is the sum of the path lengths between nodes $j$ and $i$ that yield the smallest distance, which can be found for any node $i \in [n]$ by computing the minimum spanning tree for the distance graph from node $j$\edit{, since} $D_{F(\Xi_{ij})} = \sum_{k=1}^{L} d_{n_k n_{k-1}}$ describes the cost of traveling any path between $i$ and $j$ in the graph.
It is important to note that since it is possible $d_{ij} \neq d_{ji}$, we also have that $D_{ij} \neq D_{ji}$. Further, the shortest path to and from any two endpoints may not necessarily contain the same intermediate points.

We can extend the notion of the most probable path and effective distance further by considering the most probable path from a subgroup of nodes to any given node. Since we are interested in the effective distance for an infection to travel to uninfected nodes with respect to an infected group of nodes, we can construct a node subgroup as follows
\begin{equation}
    \mathcal{X} = \big\{ i \in [n]; x_i > p \big\},
\end{equation}
where $x_i$ is the infection level at node $i$ and $p \in (0,1]$ is some threshold at which we consider the node to be infected. We can then compute the probability that an individual from node group $\mathcal{X}$ travels to node $i$ as
\begin{equation}
    \tilde{w}_{i\mathcal{X}} =\frac{1}{\sum_{l \in \mathcal{X}} N_l} \sum_{j \in \mathcal{X}} N_j w_{ij}.
\end{equation}
Thus, we can construct another effective distance graph, with respect to a group of nodes $\mathcal{X}$, using edge weights
\begin{equation}
\label{eq:dist_graph_window}
    \tilde{d}_{ij}^{\mathcal{X}} =
    \begin{cases}
    0 &  i \in \mathcal{X} \text{ or } i=j \\
    - \log \tilde{w}_{i\mathcal{X}} & i \notin \mathcal{X}, j \in \mathcal{X}, \tilde{w}_{i\mathcal{X}} > 0 \\
    - \log w_{ij} & i,j \notin \mathcal{X},  w_{ij} > 0 \\
    \infty & \text{otherwise},
    \end{cases}
\end{equation}
\noindent making the effective distance in the case from any node $j$ to every other node $i \in [n]$, with respect to the node group~${\mathcal{X}}$,
\begin{equation}
\label{eq:eff_dist_Xcal}
    \tilde{D}_{ij}^{\mathcal{X}} =\min_{\Xi_{ij}} \big( \tilde{d}_{j n_{L-1}}^{\mathcal{X}} + \dots + \tilde{d}_{n_1 i}^{\mathcal{X}} \big).
\end{equation}

When a disease outbreak occurs, one important question for policymakers is how long it will take for the disease to infect new nodes given past infection data. Given travel data and knowledge of which nodes are currently infected, we can use \eqref{eq:eff_dist_Xcal} to predict the arrival time of the disease to uninfected nodes as follows. Let $T_k$ be the time of the $k$th arrival of the disease to an uninfected node,  and suppose $k > \tau$ disease arrivals have occurred, where $\tau$ is the size of your training window:
\begin{enumerate}
    \item At time $T_k$, select training data that includes the disease arrivals from time $T_{k-\tau}$ to $T_{k}$, where $\tau = |\{T_{k-\tau}, T_{k-\tau+1}, \dots, T_k \}|$
    \item Compute the effective distance of each node in the training data to the infected group $\mathcal{X}$ at time $T_{k-\tau}$
    \item Given the arrival times of the training data and the computed effective distances to the infected group, fit a line to these training data 
    \item Using the fitted line, predict the arrival time of the remaining uninfected nodes given their effective distance to the current infected group $\mathcal{X}$ at time $T_k$.
\end{enumerate}
\noindent To ensure that this method always predicts sensible arrival times, as the fitted line may predict arrival times that have already passed, a constant shift is added to the predictions such that the next predicted arrival time is greater than zero. This method differs from \cite{brockmann2013hidden} most significantly in its ability to make updated predictions on the arrival time of an outbreak, which depends on the current flow rate. Further, it facilitates an updated notion on the distance of an outbreak from any given node based on the likelihood of transmission from multiple source nodes as the epidemic progresses, described by \eqref{eq:dist_graph_window}, rather than relying on the distance from the original source. These factors contribute to a more accurate prediction of arrival time in a more realistic evolving scenario. 

We illustrate this algorithm by simulating the spread of a disease over a network using our model as defined in Section~\ref{sec:model}. We construct our network to emulate counties in Minnesota with travel between counties given by inter-county travel data collected during part of the COVID-19 pandemic. A full description of this data is given in Section~\ref{sec:case_study}. The system is then simulated using infection parameters satisfying Assumption~\ref{assume:parameters} and sampling parameter $h=1$ (to emulate a daily resolution of data collection), where a node is selected as the origin of the disease and predictions on the next nodes to be infected are made. Using \eqref{eq:eff_dist_origin}, we compute the effective distance of each node from the origin of infection and plot the effective distance versus the arrival time of the disease to each node, shown in Figure~\ref{fig:sim_mn_arrival_vs_effdist}. Note the linear relationship between effective distance and arrival time.
\begin{figure}
    \centering
    \includegraphics[width=.6\columnwidth]{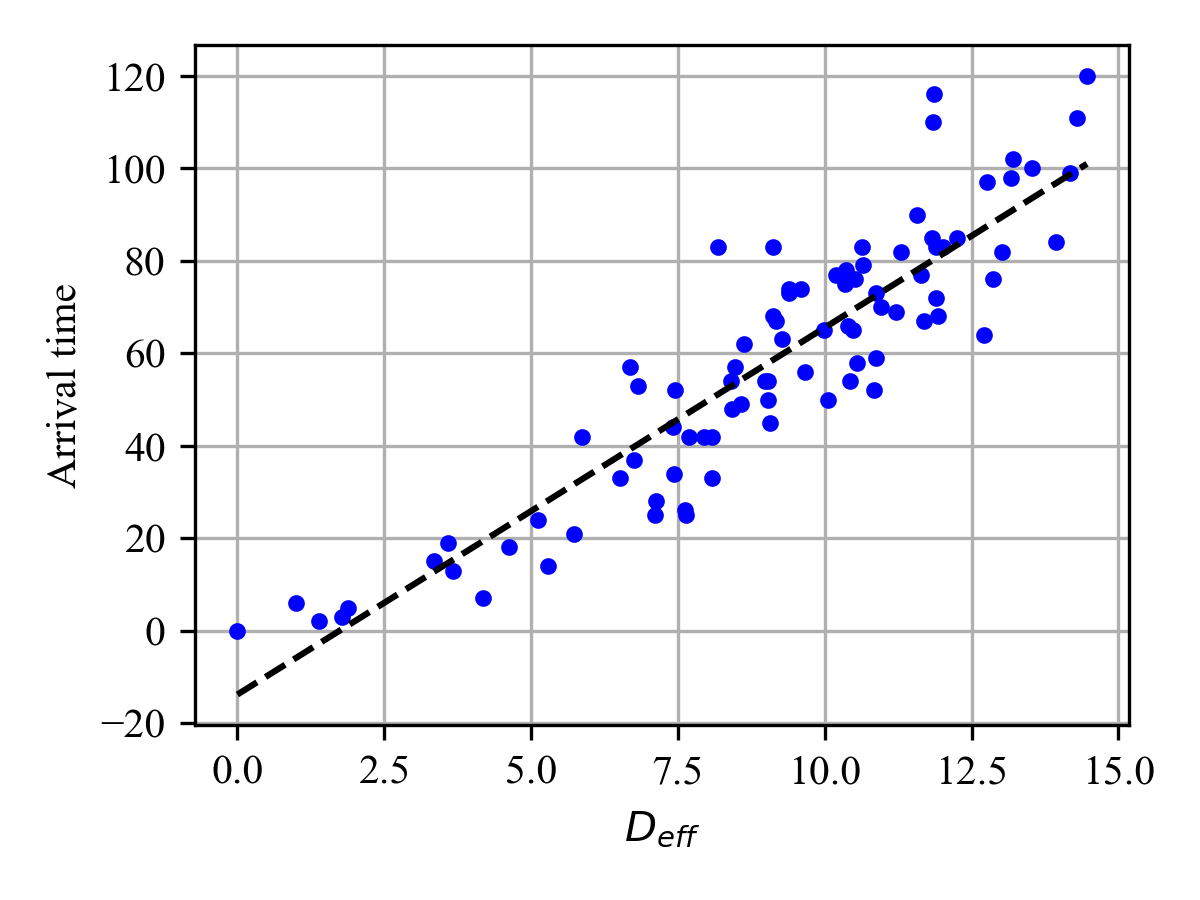}
    \caption{The arrival time of the disease to each node versus the effective distance of each node in our simulated system from the disease origin, computed by \eqref{eq:eff_dist_origin}. The dashed line shows the linear fit of all data points.}
    \label{fig:sim_mn_arrival_vs_effdist}
\end{figure}

In Figure~\ref{fig:sim_k_day_predict} we show an example of using the shifting window algorithm to predict the remaining infection arrival times with an arrival window of $\tau = 20$ disease arrival events. To evaluate the prediction error of our shifting window method, we compute the root-mean-square (RMS) error of the linear fit for all arrival times versus effective distance from the origin at the start of the epidemic (with the linear fit shown in Figure~\ref{fig:sim_mn_arrival_vs_effdist}) and compare it with the average RMS error of the shifting window method that predicts the next 10 arrival times whenever a new arrival occurs. This comparison yields an RMS error of 4.59 timesteps (analogous to days since $h=1$) for the linear fit of all arrival times versus effective distance from the disease origin (which can only be evaluated after all data has been collected) versus an average RMS error of 1.91 timesteps for the shifting window predicting the next 10 arrival times whenever a new arrival occurs, which is a 58\% reduction in prediction error in addition to being usable in real-time to predict the next immediate disease arrivals versus analysis after the fact. 

\begin{figure}
\centering
\begin{subfigure}{0.45\columnwidth}
    \includegraphics[width=\textwidth]{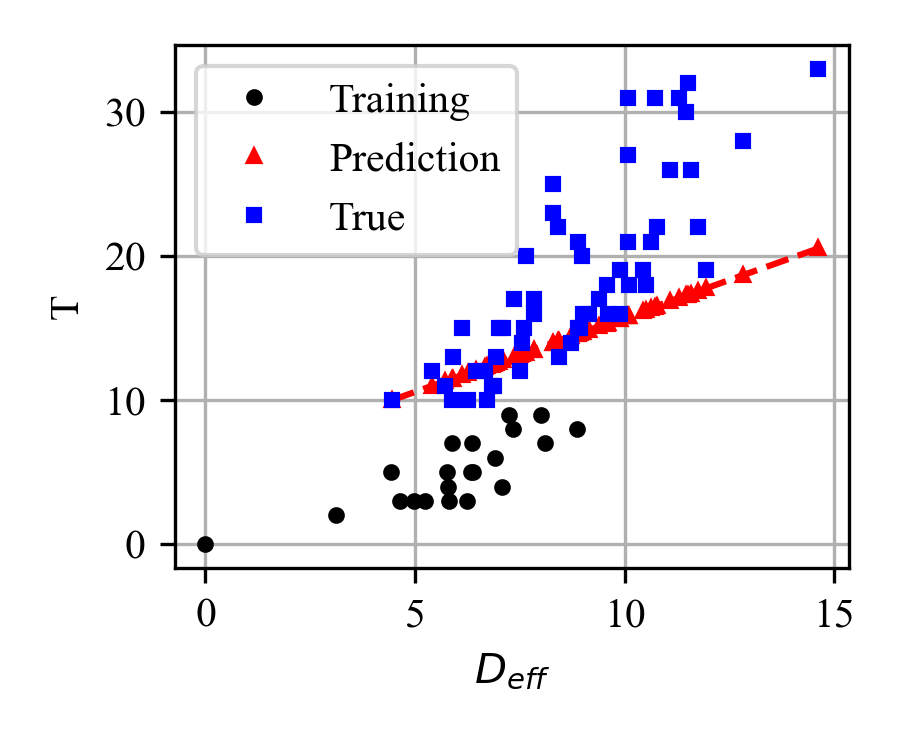}
    \caption{$T=T_{50}$}
\end{subfigure}
\begin{subfigure}{0.45\columnwidth}
    \includegraphics[width=\textwidth]{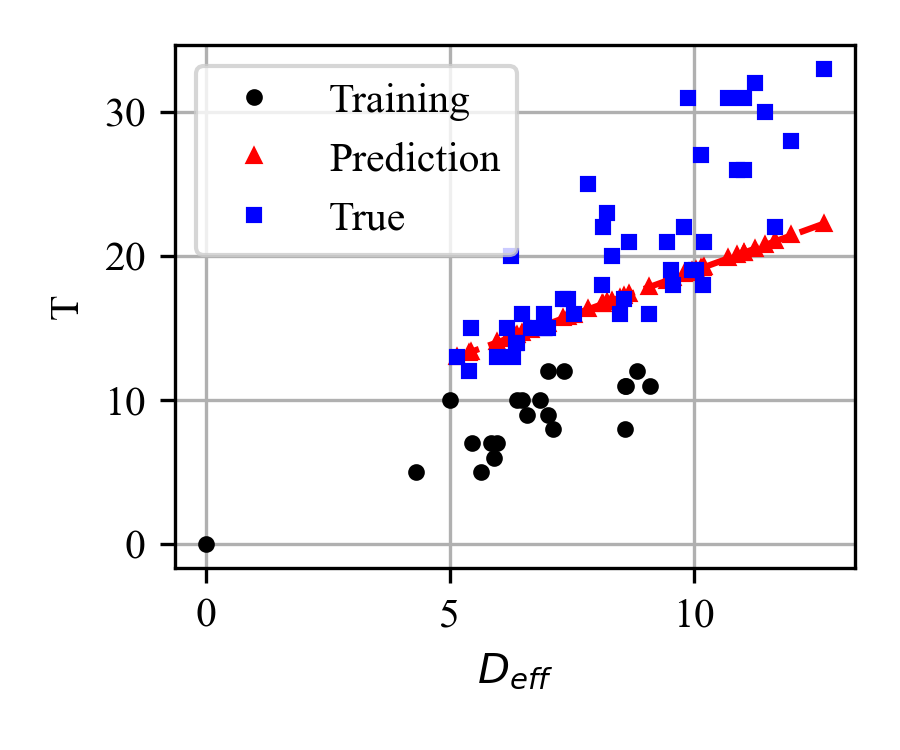}
    \caption{$T=T_{60}$} 
\end{subfigure} 
\begin{subfigure}{0.45\columnwidth} 
    \includegraphics[width=\textwidth]{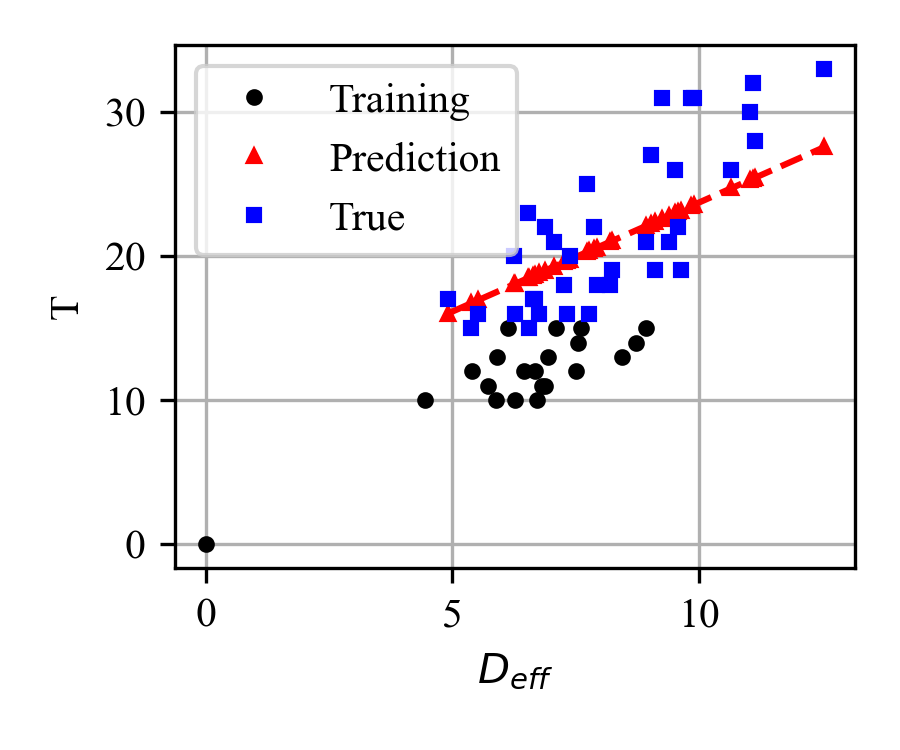} 
    \caption{$T=T_{70}$} 
\end{subfigure}  
\begin{subfigure}{0.45\columnwidth} 
    \includegraphics[width=\textwidth]{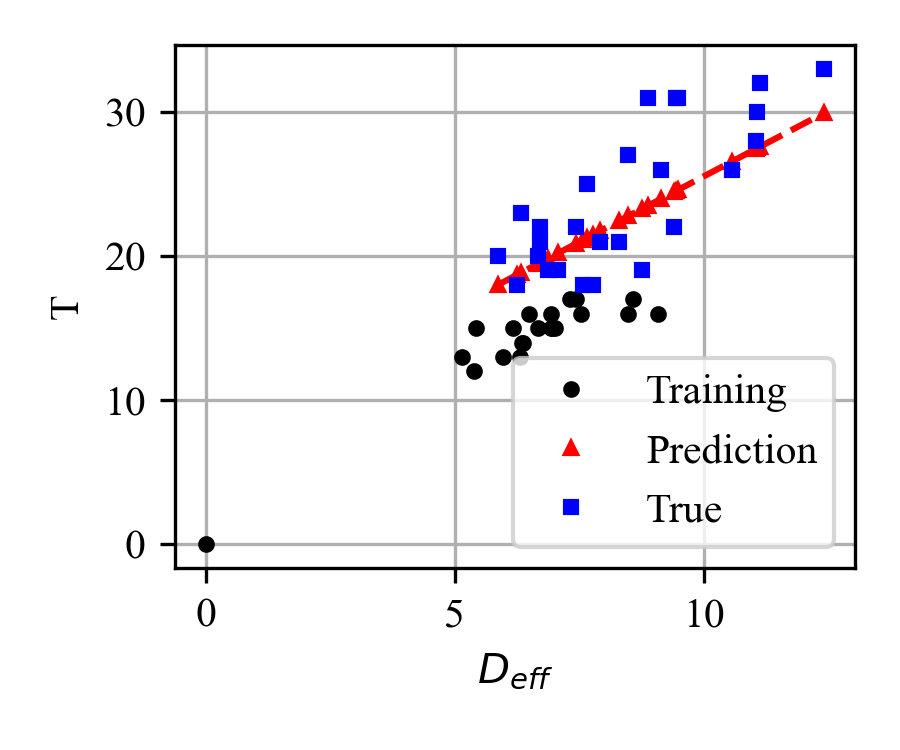} 
    \caption{$T=T_{80}$} 
\end{subfigure}
\caption{Example of shifting prediction window of arrival times in the simulated system at four different arrival times during the spreading process ($T=T_{50},T_{80}$) where training window $\tau=20$, with the data used to make predictions (black circles), true arrival times for each node versus effective distance (blue squares), and shifted best-fit predictions (red triangles) shown at several arrival intervals, where the effective distances are computed at each disease arrival using \eqref{eq:eff_dist_Xcal}.}
\label{fig:sim_k_day_predict}
\end{figure}

\section{Case Study: Minnesota and COVID-19}
\label{sec:case_study}
We apply our model developed in Section \ref{sec:model} and methods described in Section \ref{sec:applications} to data collected from the beginning of the 2020 COVID-19 pandemic in Minnesota. Daily infection data at the county level is taken from usafacts.org which contains the number of confirmed COVID-19 cases over the period of March 2020 to December 2020 \cite{MNdata}. To infer the infection states $s_i,e_i,x_i,r_i$ for each county $i$ from the confirmed cases data we apply a constant shift that assumes confirmed individuals became exposed one week before testing positive, at which point they are considered infected. Then, we assume that infected individuals remain infected for one week after testing positive, after which they are considered recovered. Finally, to account for the rate at which individuals can lose immunity, we transfer recovered individuals back into being susceptible after six weeks of being recovered. 
For travel data, we worked with the Minnesota Department of Transportation to obtain data collected by StreetLight \cite{streetlight} that estimates the number of trips between counties via anonymous geolocalization using smartphones. This provides an estimate of the total number of trips made by individuals between counties over a specified period of time. We choose a weekly time scale in an effort to average out periodic behaviors such as commuting and use this average to estimate the daily flow of individuals between counties.

\subsection{Parameter Identification}
Using the inferred infection states for each county in MN along with the travel flows computed from the collected traffic data, we can use \eqref{eq:param_sol} to solve for the best-fit infection parameters using a convex solver and constraining solutions to be strictly non-negative using our model defined in Section~\ref{sec:model}. We then re-simulate the model using the same initial conditions and learned parameters, with the results of the simulation compared with the inferred infection states shown in Figure~\ref{fig:param_fit_flowsmodel}, \edit{where we show a subset of real infection levels at five counties (solid lines) and the predicted infection levels from the learned parameters (dashed lines)}. Note that although the predicted infection peak of the model with learned parameters is much higher than the actual data, we see that the peak infection time is more closely approximated by the travel flows model, as shown in Table~\ref{tab:param_predict_error}. 

\edit{
Regarding the error in magnitude prediction, there are several likely contributing factors. First, in our estimation algorithm, we assume spread parameters ($\beta_i,\sigma_i,\delta_i,\alpha_i$) that are not time-varying, which is most likely not the case in reality since lock-downs, social distancing policies, medical interventions, and virus mutations can all change the spread parameters over time, which violates this assumption. Additionally, we are assuming that our rather naive method for state estimation (by choosing a priori the recovering rate and the rate at which exposed individuals become infected) yields the correct states for all counties and that all counties are reporting infection cases in the same manner, which may not be true in all cases. Thus, these combinations of structural assumptions and naivety in state estimation are both likely large contributors to the magnitude error in state prediction.
}

For comparison, we apply this parameter fit with another similar model, modified from our previous work in \cite{vrabac2021capturing} to allow individuals to become reinfected. The major difference between our model in Section~\ref{sec:model} and this modified model from \cite{vrabac2021capturing} is that travel data is used to infer direct interaction between sub-populations rather than explicitly modeling the flow of individuals between subpopulations. We then take the best-fit parameters for this model given the infection and travel data and simulate the model with the learned parameters, shown in Figure~\ref{fig:param_fit_LCSSmodel}, \edit{where we show a subset of real infection levels at the same five counties as Figure~\ref{fig:param_fit_flowsmodel} (solid lines) and the predicted infection levels from the learned parameters (dashed lines)}. Note that in this case, although the error is smaller in magnitude than the prediction from the parameter fitting of the travel flows model, the direct interaction model from \cite{vrabac2021capturing} fails to reproduce the peaking behavior shown clearly in the inferred infection states from the COVID-19 data and in Table~\ref{tab:param_predict_error}.

\begin{table}[]
\centering
\begin{tabular}{|c|cc|}
\hline
 & Infection RMSE & Peak Time RMSE \\ \hline
SEIRS (Travel Flows) & 0.023          & 4.15 Weeks     \\
SEIRS (\cite{vrabac2021capturing})    & 0.007          & 21.21 Weeks \\ \hline
\end{tabular}
\caption{The infection RMSE, defined as $\sqrt{\frac{1}{nT}\sum_{i \in [n],k\in[T]}\left( x_i^k - \hat{x}_i^k\right)^2}$, where $n$ is the number of nodes, $T$ is the number of samples, and $\hat{x}_i^k$ is the predicted infection level \edit{at} node $i$ \edit{for} time step $k$, and the peak time RMSE, defined as $\sqrt{\frac{1}{n} \sum_{i \in [n]} \left(\argmax_{k}(x_i^k) - \argmax_{k}(\hat{x}_i^k) \right)^2}$, for the parameter fitting predictions of the SEIRS travel flows model defined in Section~\ref{sec:model} (shown in Figure~\ref{fig:param_fit_flowsmodel}) and modified SEIRS model from \cite{vrabac2021capturing} (shown in Figure~\ref{fig:param_fit_LCSSmodel}), respectively, on the COVID-19 infection data.
}
\label{tab:param_predict_error}
\end{table}

\begin{figure}
    \centering
    \captionsetup{aboveskip=-1pt,belowskip=-12pt}
    \includegraphics[width=.7\columnwidth]{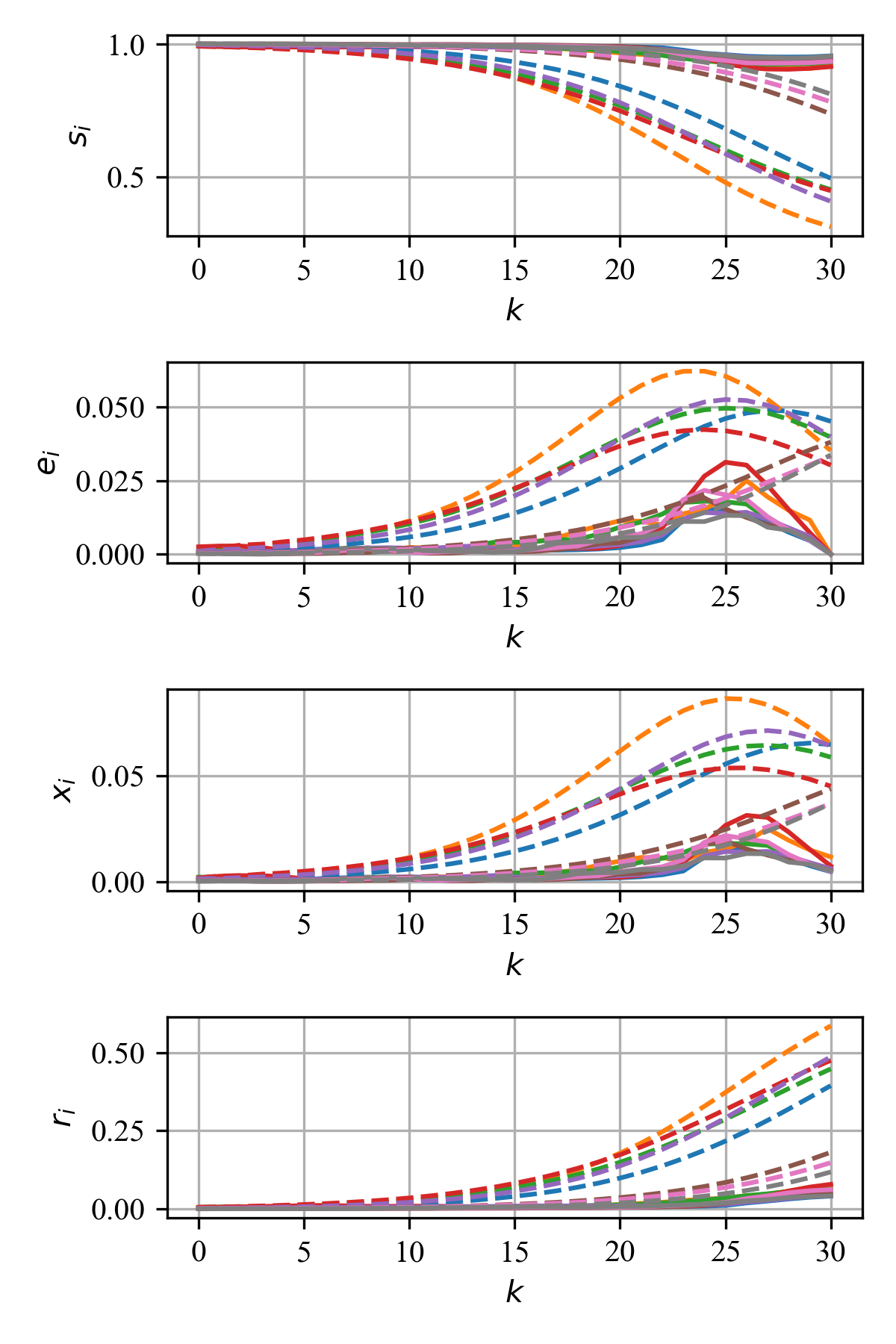}
    \caption{Simulating \eqref{eq:flows_ind_node_cont} for select MN counties using traffic data to estimate inter-county travel flows (dashed lines) with best-fit parameters learned from the estimated epidemic states using COVID-19 infection data (solid lines).}
    \label{fig:param_fit_flowsmodel}
\end{figure}

\begin{figure}
    \centering
    \captionsetup{aboveskip=-1pt,belowskip=-12pt}
    \includegraphics[width=.7\columnwidth]{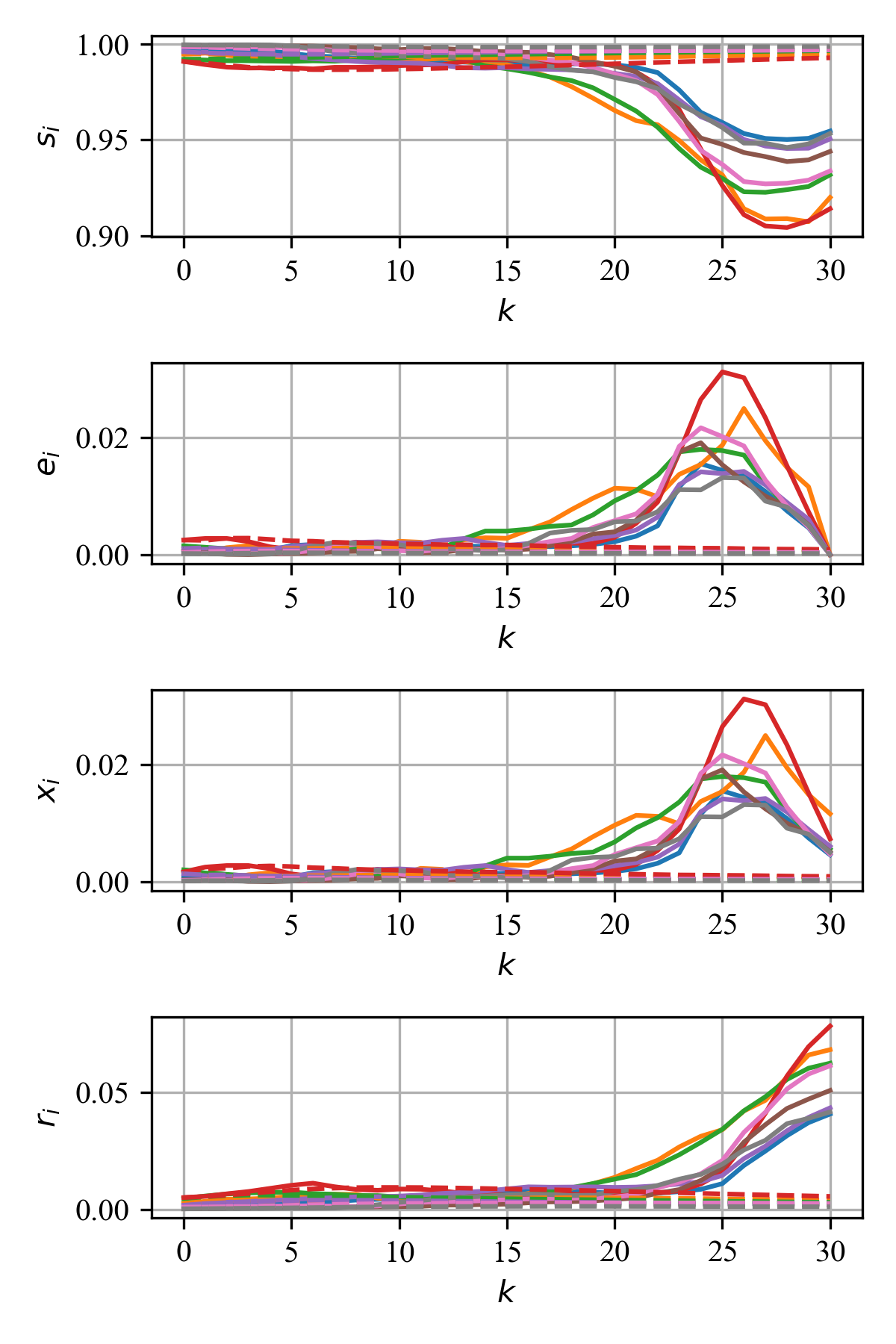}
    \caption{Simulating the modified direct connection model from \cite{vrabac2021capturing} for select MN counties using traffic data to estimate inter-county travel flows (dashed lines) with best-fit parameters learned from the estimated epidemic states using COVID-19 infection data (solid lines).}
    \label{fig:param_fit_LCSSmodel}
\end{figure}

\subsection{Predicting Disease Arrival Time}
We use the methods described in Section~\ref{sec:eff_dist_flows} and the 
collected travel flows data to make predictions on the arrival time of COVID-19 to counties in MN. In Figure~\ref{fig:MN_arrival_vs_dist} we compare the arrival time of the first reported case with both the effective distance and geographic distance of each county to the disease origin (i.e., the county with the first reported case in MN), with the best-fit line shown for each. We compute the R-value for each best-fit line to be $0.53$ and $0.48$ for the effective distance and geographic distance, respectively, suggesting that the effective distance is a better predictor of arrival time, albeit by a small margin, than geographic distance when considering all arrival times with respect to the disease origin. 
Finally, we demonstrate the shifting window prediction method on the MN COVID-19 data in Figure~\ref{fig:MN_k_day_predict}. We find an RMS error of close to 19 days for the linear fit of all arrival times versus effective distance from the disease origin versus an average RMS error of 4 days for the shifting window predicting the next 10 arrival times whenever a new arrival occurs, yielding a significant $79\%$ reduction in prediction error in addition to enabling real-time arrival predictions. 

\begin{figure}
    \centering
    \includegraphics[width=.6\columnwidth]{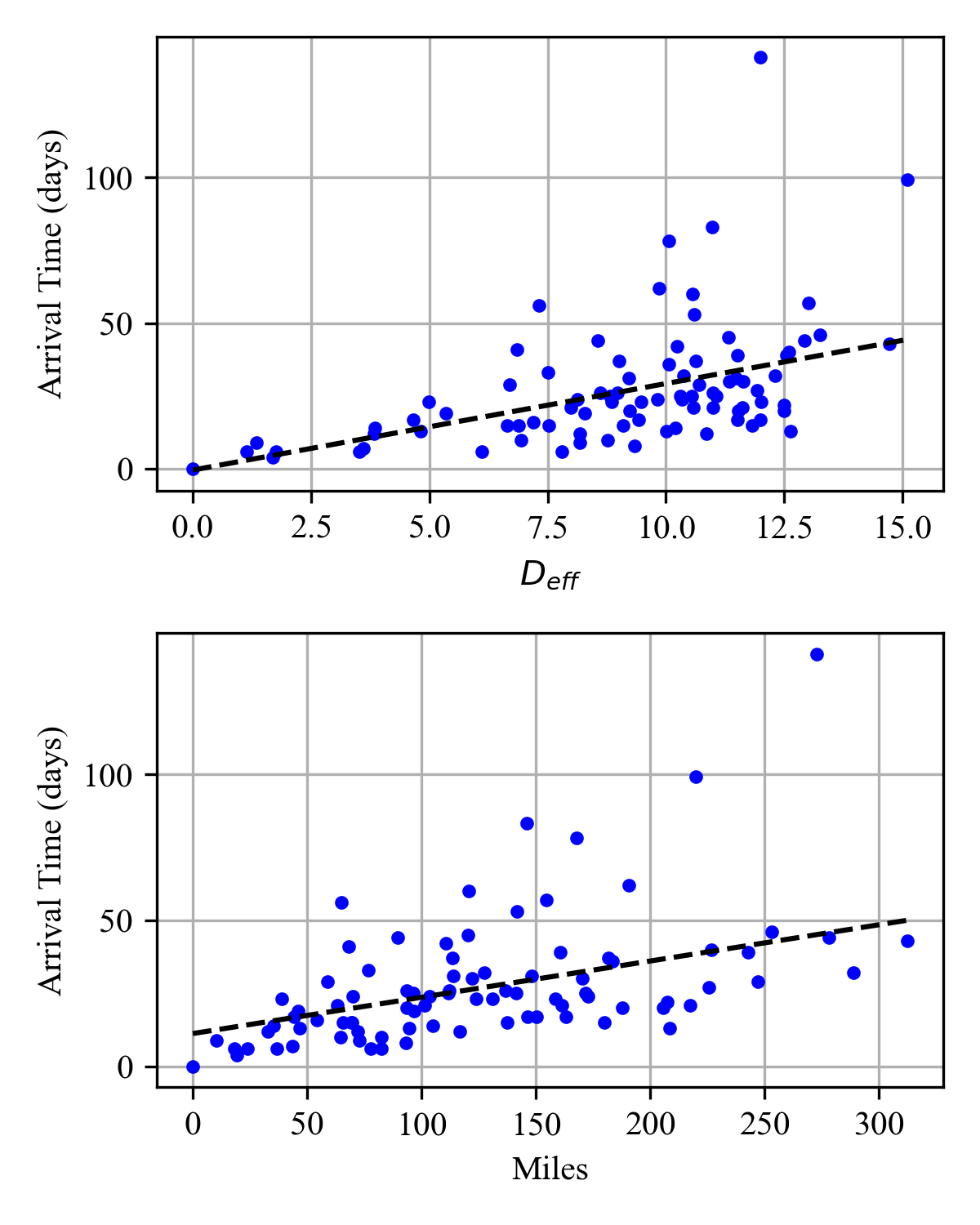}
    \caption{The arrival time of the disease to each node versus the effective distance (top) of each county in MN from the county with the first detected COVID-19 cases during the 2020 pandemic, computed by \eqref{eq:eff_dist_origin}, compared with the geographic distance of each county (bottom) from the disease origin. The dashed line shows the linear fit of all data points for each case respectively.}
    \label{fig:MN_arrival_vs_dist}
\end{figure}

\begin{figure}
\centering
\begin{subfigure}{0.45\columnwidth}
    \includegraphics[width=\textwidth]{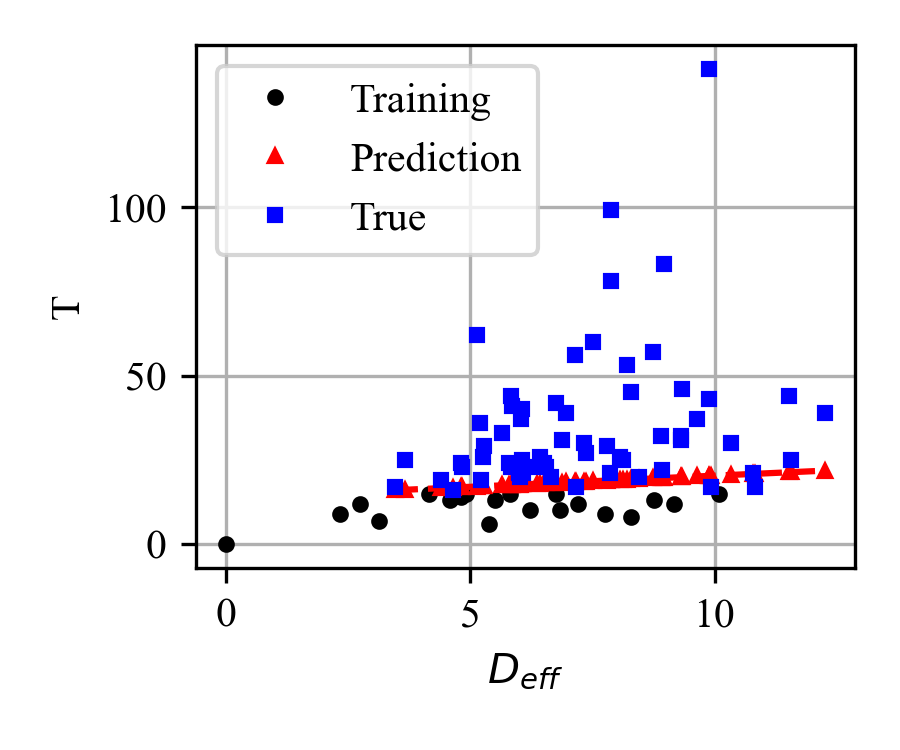}
    \caption{$T=T_{45}$}
\end{subfigure}
\begin{subfigure}{0.45\columnwidth}
    \includegraphics[width=\textwidth]{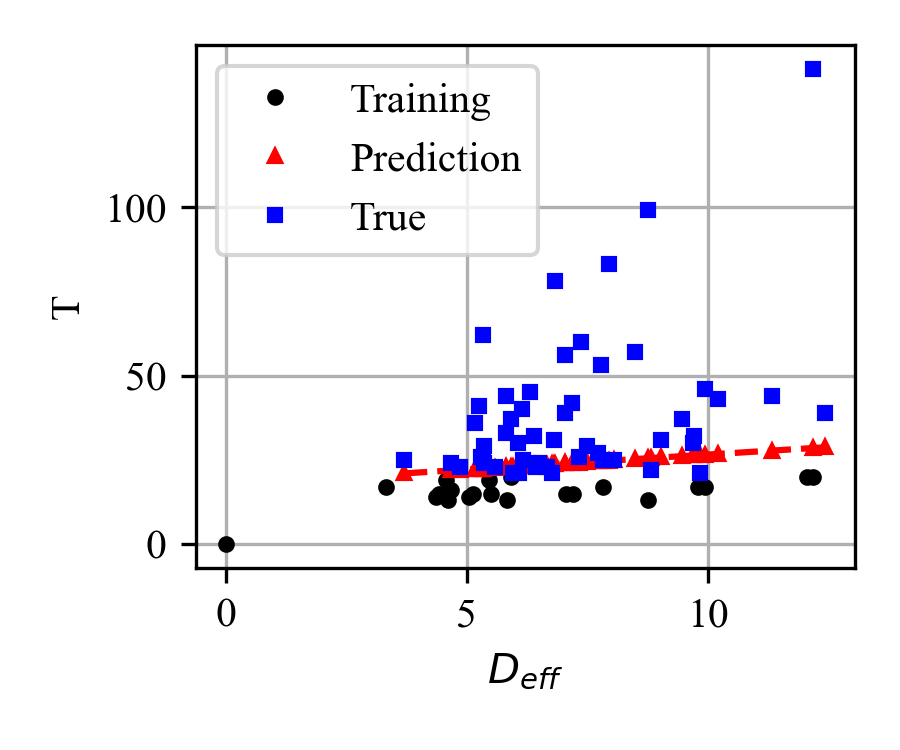}
    \caption{$T=T_{55}$} 
\end{subfigure} 
\begin{subfigure}{0.45\columnwidth} 
    \includegraphics[width=\textwidth]{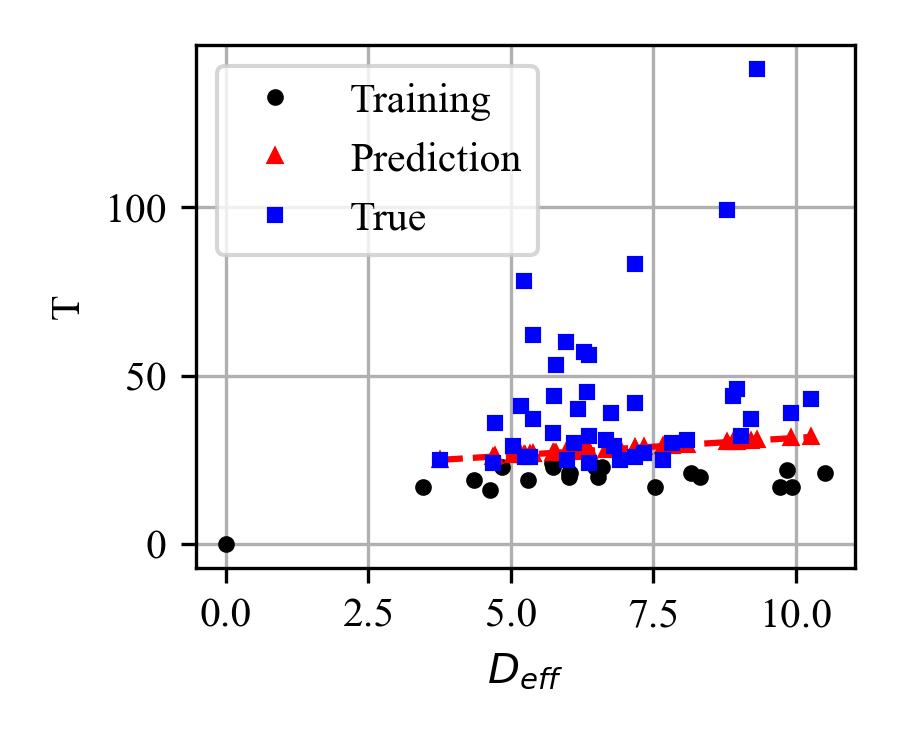} 
    \caption{$T=T_{65}$} 
\end{subfigure}  
\begin{subfigure}{0.45\columnwidth} 
    \includegraphics[width=\textwidth]{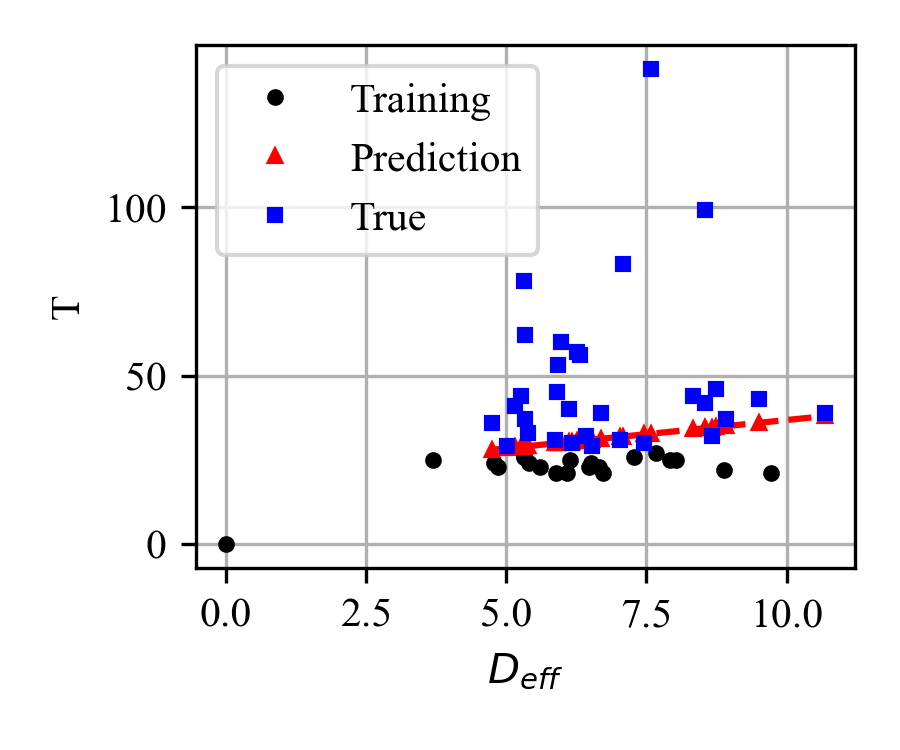} 
    \caption{$T=T_{75}$} 
\end{subfigure}
\caption{Examples of the shifting prediction window of arrival times of COVID-19 for counties in MN at four different arrival times during the pandemic ($T=T_{45},T_{75}$) where training window $\tau=20$, with the data used to make predictions (black circles), true arrival times for each county versus effective distance (blue squares), and shifted best-fit predictions (red triangles) shown at several arrival intervals, where the effective distances is computed at each disease arrival using \eqref{eq:eff_dist_Xcal}.}
\label{fig:MN_k_day_predict}
\end{figure}

\section{Conclusion}

In this work, we have developed a model for simulating the spread of an SEIRS epidemic process using networked population flows as the propagation mechanism. We provide sufficient conditions under which the healthy state of the system will be locally stable or unstable and show via model analysis that there exists no valid perturbation to the population flows that will change the local stability of any healthy state. These results suggest that manipulating population flow alone is insufficient to achieve the long-term eradication of the disease in the defined system where a loss of immunity is present. Conversely, the result also suggests that perturbing the population flows will not prevent a system from achieving disease eradication (i.e., causing the healthy state to become unstable if it is already locally stable). Further, we provide sufficient conditions under which the system will enter a unique endemic state. Although the analytical results show that travel control is insufficient for affecting the long-term behavior of the epidemic, we show that travel flows are a good predictor of disease arrival time, which can provide valuable model-agnostic estimates of the arrival times for a given disease that are useful in allocating a fixed amount of medical resources. To this end, we then develop tools for applying our proposed model to data, such as spreading parameter identification and disease arrival time prediction and illustrate these tools using a case study of both travel and infection data from the counties in MN during part of the COVID-19 pandemic.

\edit{
There are several directions for continued research on the work presented in this paper. Future directions for analysis included a study on the global stability of the healthy state, as only local stability is explored in this work, as well as a study of the stability of the endemic equilibrium for this system. Additionally, in this work, we have assumed that no infections occur during the transit of individuals between populations, and adding this element to our travel flows model may provide additional meaningful insights into the role of travel in spreading infectious diseases. Further, since the use of effective distance to predict disease arrival time is independent of the epidemic model being used, a more thorough analysis can be performed comparing the usefulness of effective distance in predicting arrival time for other epidemic models with travel flows (such as networked SIR, SIS, SAIR, etc.).
}

\bibliographystyle{unsrtnat}
\bibliography{references}  






\end{document}